\newtheorem{theorem}{Theorem}[section]
\newtheorem{lemma}{Lemma}[section]
\numberwithin{equation}{section}
\author[G. Nemes]{Gerg\H{o} Nemes}
\address{Central European University, Department of Mathematics and its Applications, H-1051 Budapest, N\'ador utca 9, Hungary}
\email{nemesgery@gmail.com}
\keywords{gamma function, error bounds, asymptotic expansions}
\subjclass[2010]{33B15, 30E15, 65D20}
\begin{document}

\title[Asymptotics of the gamma function and its reciprocal]{Error bounds and exponential improvements\\ for the asymptotic expansions of\\ the gamma function and its reciprocal}

\begin{abstract} In (Boyd, Proc. R. Soc. Lond. A \textbf{447} (1994) 609--630), W. G. C. Boyd derived a resurgence representation for the gamma function, exploiting the reformulation of the method of steepest descents by M. Berry and C. Howls (Berry and Howls, Proc. R. Soc. Lond. A \textbf{434} (1991) 657--675). Using this representation, he was able to derive a number of properties of the asymptotic expansion for the gamma function, including explicit and realistic error bounds, the smooth transition of the Stokes discontinuities, and asymptotics for the late coefficients. The main aim of this paper is to modify the resurgence formula of Boyd making it suitable for deriving better error estimates for the asymptotic expansions of the gamma function and its reciprocal. We also prove the exponentially improved versions of these expansions complete with error terms. Finally, we provide new (formal) asymptotic expansions for the coefficients appearing in the asymptotic series and compare their numerical efficacy with the results of earlier authors.
\end{abstract}
\maketitle

\section{Introduction}

It is well known that, as $z \to \infty$ in the sector $\left|\arg z\right| \leq \pi - \delta < \pi$, for any $0 < \delta \leq \pi$, the gamma function and its reciprocal have the following asymptotic expansions
\begin{equation}\label{eq1}
\Gamma \left( z \right) \sim \sqrt {2\pi} z^{z - \frac{1}{2}} e^{ - z} \sum\limits_{n = 0}^\infty  {\left( - 1 \right)^n \frac{\gamma _n}{z^n}} ,
\end{equation}
\begin{equation}\label{eq2}
\frac{1}{\Gamma \left( z \right)} \sim \frac{1}{\sqrt {2\pi}} z^{-z + \frac{1}{2}} e^z \sum\limits_{n = 0}^\infty  {\frac{\gamma _n}{z^n}} ,
\end{equation}
respectively. Here the $\gamma_n$'s are the so-called Stirling coefficients, the first few being $\gamma_0 = 1$ and
\[
\gamma _1  =  - \frac{1}{12},\quad \gamma _2  = \frac{1}{288},\quad \gamma _3  = \frac{139}{51840},\quad \gamma _4  =  - \frac{571}{2488320}.
\]
For a detailed discussion of the computation of these coefficients, see Appendix \ref{appendix}. The first proof of the expansion \eqref{eq1} for $z>0$ dates back to Laplace (see Copson \cite[p. 2]{Copson}). Since the 20th century, these expansions become standard textbook examples to illustrate various techniques, such as the method of Laplace itself or the method of steepest descents (see, for example, Copson \cite[pp. 53--58 and pp. 70--72]{Copson}, Paris \cite[pp. 24--28]{Paris} and Wong \cite[pp. 60--62 and pp. 110--111]{Wong}).

Error bounds for the expansion \eqref{eq1} were derived by Olver \cite{Olver1},\cite{Olver2} though, the application of these bounds requires the computation of extreme values of certain implicitly defined functions. It was not, however, until the end of the 20th century that simple, explicit error bounds for the asymptotic series \eqref{eq1} were found. Define for any $N\geq 1$ the remainder $R_N \left( z \right)$ by
\[
\Gamma \left( z \right) = \sqrt {2\pi} z^{z - \frac{1}{2}} e^{ - z} \left( {\sum\limits_{n = 0}^{N - 1} {\left(- 1 \right)^n \frac{\gamma _n }{z^n}}  + R_N \left( z \right)} \right) .
\]
Then Boyd \cite{Boyd2}, \cite[p. 141]{NIST} showed that
\begin{equation}\label{eq3}
\left| {R_N \left( z \right)} \right| \le \frac{\left( {1 + \zeta \left( N \right)} \right)\Gamma \left( N \right)}{\left( {2\pi } \right)^{N + 1} \left| z \right|^N}\frac{\min \left( {\sec \theta ,2\sqrt N } \right) + 1}{2} \; \text{ if } \; \left| \theta  \right| \le \frac{\pi }{2},
\end{equation}
where $\theta = \arg z$ and $\zeta$ denotes Riemann's zeta function. When $N=1$, the quantity $\zeta \left( N \right)$ has to be replaced by $3$.

Boyd's derivation of the error bound \eqref{eq3} is based on his resurgence formula for the gamma function coming from a general theory for complex Laplace-type integrals developed by Berry and Howls \cite{Berry} (see also Boyd \cite{Boyd1} and Paris \cite[pp. 94--99]{Paris}). Not just this error bound but the smooth transition of the Stokes discontinuities and the asymptotic behaviour of the coefficients $\gamma_n$ were discussed by Boyd using the resurgence formula.

The main goal of this paper is to modify the resurgence formula of Boyd making it suitable for deriving better error estimates for both \eqref{eq1} and \eqref{eq2} when $\Re\left(z\right)>0$. We also prove exponentially improved expansions for the gamma function and its reciprocal. Finally, we provide new (formal) asymptotic expansions for the Stirling coefficients and compare their numerical efficacy with the earlier results of Dingle and Boyd.

Similarly to $R_N\left(z\right)$, denote by $\widetilde R_N\left(z\right)$ the relative remainder of the series \eqref{eq2} after $N \geq 1$ terms, so that the last kept term is $\gamma_{N-1}z^{1-N}$. In the first theorem, we give bounds for the error terms $R_N\left(z\right)$ and $\widetilde R_N\left(z\right)$ when $z$ is real and positive.

\begin{theorem}\label{thm1} Suppose that $z>0$ and $N\geq 1$. Then
\begin{equation}\label{eq5}
\left( { - 1} \right)^N \gamma _{2N - 1}  \ge 0 \; \text{ and } \; \left( { - 1} \right)^{N + 1} \gamma _{2N}  \ge 0,
\end{equation}
and
\begin{equation}\label{eq6}
\left( { - 1} \right)^{N + 1} R_{2N - 1} \left( z \right) = \Theta _1 \left( {z,N} \right)\left( { - 1} \right)^N \frac{{\gamma _{2N - 1} }}{{z^{2N - 1} }} + \Theta _2 \left( {z,N} \right)\left( { - 1} \right)^{N + 1} \frac{{\gamma _{2N} }}{{z^{2N} }},
\end{equation}
\[
\left( { - 1} \right)^{N + 1} R_{2N} \left( z \right) = \Theta _2 \left( {z,N} \right)\left( { - 1} \right)^{N + 1} \frac{{\gamma _{2N} }}{{z^{2N} }} - \Theta_3 \left( {z,N} \right)\left( { - 1} \right)^{N + 1} \frac{{\gamma _{2N + 1} }}{{z^{2N + 1} }} ,
\]
\[
\left( { - 1} \right)^N \widetilde R_{2N - 1} \left( z \right) = \Theta _1 \left( {z,N} \right)\left( { - 1} \right)^N \frac{{\gamma _{2N - 1} }}{{z^{2N - 1} }} - \Theta _2 \left( {z,N} \right)\left( { - 1} \right)^{N + 1} \frac{{\gamma _{2N} }}{{z^{2N} }} ,
\]
\[
\left( { - 1} \right)^{N + 1} \widetilde R_{2N} \left( z \right) = \Theta _2 \left( {z,N} \right)\left( { - 1} \right)^{N + 1} \frac{{\gamma _{2N} }}{{z^{2N} }} + \Theta _3 \left( {z,N} \right)\left( { - 1} \right)^{N + 1} \frac{{\gamma _{2N + 1} }}{{z^{2N + 1} }} .
\]
Here $0 < \Theta_i \left( {z,N} \right) < 1$ ($i=1,2,3$) is a suitable number depending on $z$ and $N$. In particular, we have
\[
\left| R_{2N - 1} \left( z \right)\right| = \left( { - 1} \right)^{N + 1} R_{2N - 1} \left( z \right) < \left( { - 1} \right)^N \frac{\gamma _{2N - 1}}{z^{2N - 1}} + \left( { - 1} \right)^{N + 1} \frac{\gamma _{2N}}{z^{2N}},
\]
\[
\left|R_{2N} \left( z \right)\right| < \max\left(\left( { - 1} \right)^{N + 1} \frac{\gamma _{2N} }{z^{2N}},\left( { - 1} \right)^{N+1} \frac{\gamma _{2N + 1} }{z^{2N + 1}}\right),
\]
\[
\left| \widetilde R_{2N - 1} \left( z \right) \right| < \max \left( \left( { - 1} \right)^N \frac{{\gamma _{2N - 1} }}{{z^{2N - 1} }},\left( { - 1} \right)^{N + 1} \frac{\gamma _{2N}}{z^{2N}} \right),
\]
and
\[
\left| \widetilde R_{2N} \left( z \right) \right| = \left( { - 1} \right)^{N + 1} \widetilde R_{2N} \left( z \right) < \left( { - 1} \right)^{N + 1} \frac{\gamma _{2N}}{z^{2N}} + \left( { - 1} \right)^{N + 1} \frac{\gamma _{2N + 1}}{z^{2N + 1}} .
\]
\end{theorem}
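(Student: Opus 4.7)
The strategy is to build on the modified resurgence representation of $R_N(z)$ and $\widetilde R_N(z)$ developed earlier in the paper, which, for real positive $z$, writes each remainder as an integral over $(0,\infty)$ whose integrand is non-negative on that ray. The treatment of $R_N$ and $\widetilde R_N$ will run in parallel, since the two cases differ only by a sign convention in the resurgence kernel inherited from the $(-1)^n$ in the $\Gamma$-series.

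First I would derive the sign assertions \eqref{eq5}. By letting $z\to+\infty$ at fixed $N$ in the resurgence integral and matching against the leading term $(-1)^N\gamma_N/z^N$ of the asymptotic expansion, the positivity of the integrand forces the alternating sign pattern of the $\gamma_N$ claimed in \eqref{eq5}.

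Next, to obtain the four remainder identities, I would apply the elementary identity
\begin{equation*}
\frac{1}{1+u^{2}} \;=\; 1 \;-\; \frac{u^{2}}{1+u^{2}}
\end{equation*}
(or the two-term analogue matching the exact denominator of the resurgence kernel) to peel one or two successive terms of the asymptotic series off the integral. After each peel, what remains is still a positive integral, and it is strictly smaller than the one representing the term just extracted, so the first mean-value theorem for integrals produces a factor $\Theta_i(z,N)\in(0,1)$. Performing one peel for the odd-indexed remainders and two for the even-indexed ones, and tracking the sign carried by each extracted term through the parities of $N$ and through the $(-1)^n$ difference between the $\Gamma$ and $1/\Gamma$ series, reproduces each of the four identities with exactly the $+/-$ pattern displayed.

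The four explicit bounds then follow directly from $0<\Theta_i<1$: for $R_{2N-1}$ and $\widetilde R_{2N}$ both extracted terms enter with the same non-negative sign, so the sum of the two terms is an upper bound on $|R_{2N-1}|$ respectively $|\widetilde R_{2N}|$; for $R_{2N}$ and $\widetilde R_{2N-1}$ the two extracted terms enter with opposite signs, so the signed remainder lies between them and its modulus is bounded by the larger. The main obstacle I anticipate is not any single computation but the careful bookkeeping needed to keep all signs correct across the four identities and across both series, especially since the coefficients $\gamma_n$ follow the somewhat unusual sign pattern $+,-,+,+,-,-,+,+,\dots$ rather than a pure alternation, so that the parity argument must be split into residues modulo $4$ rather than modulo $2$.
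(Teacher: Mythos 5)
Your overall route coincides with the paper's: start from Boyd's resurgence formula, use $\Gamma^{\ast}(\bar z)=\overline{\Gamma^{\ast}(z)}$ to reduce each remainder for $z>0$ to two real integrals over $(0,+\infty)$ with kernel $\bigl(1+(s/z)^2\bigr)^{-1}$, identify the corresponding kernel-free integrals with $(-1)^N\gamma_{2N-1}$ and $(-1)^{N+1}\gamma_{2N}$ (your limiting argument $z\to+\infty$ is an acceptable substitute for the paper's use of $\gamma_N=z^N(\widetilde R_N(z)-\widetilde R_{N+1}(z))$), and then extract the factors $\Theta_i\in(0,1)$ from $0<\bigl(1+(s/z)^2\bigr)^{-1}<1$ via the mean value theorem. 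The final inequalities do follow from the sign bookkeeping exactly as you describe.

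The genuine gap is that you assume, rather than prove, the one fact on which everything rests: that the integrands are non-negative, i.e.\ that $\Re\,\Gamma^{\ast}(is)\ge 0$ and $-\Im\,\Gamma^{\ast}(is)\ge 0$ for all $s>0$. This is not "developed earlier in the paper" as part of the resurgence representation; it is a separate and non-trivial analytic statement (Lemma 2.1 of the paper), equivalent to the assertion that $\arg\Gamma^{\ast}(is)$ lies in $[-\tfrac{\pi}{2},0]$ along the whole positive imaginary axis. Without it, none of the sign claims \eqref{eq5}, the mean-value factors $\Theta_i\in(0,1)$, or the final bounds can be obtained — the mean value theorem in the form you invoke requires a fixed-sign weight. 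The paper proves this via Stieltjes's representation $\Gamma^{\ast}(z)=\exp\bigl(\int_0^{+\infty}Q(t)(z+t)^{-2}\,dt\bigr)$ with $0\le Q(t)\le\tfrac18$: the argument of $\Gamma^{\ast}(is)$ equals $-\int_0^{+\infty}\frac{2st}{(s^2+t^2)^2}Q(t)\,dt$, which is manifestly $\le 0$, and a two-piece estimate (the integral over $[0,1]$ is at most $\tfrac{\pi}{4}$, that over $[1,+\infty)$ at most $\tfrac{1}{16}$) shows it exceeds $-\tfrac{\pi}{2}$, so both the cosine and the sine of the phase have the required signs. Some such argument must be supplied for your proof to close; everything else in your outline is sound, modulo the minor point that each of the four remainders is handled by exactly two mean-value extractions (one per integral), not "one peel for odd and two for even".
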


In the second theorem, we provide bounds for the remainders $R_N\left(z\right)$ and $\widetilde R_N\left(z\right)$ assuming that $\Re\left(z\right)>0$.

\begin{theorem}\label{thm2} For any $N\geq 1$, we have
\begin{equation}\label{eq10}
\left| {R_N \left( z \right)} \right|,\left| {\widetilde R_N \left( z \right)} \right| \le \left( {\frac{{\left| {\gamma _N } \right|}}{{\left| z \right|^N }} + \frac{{\left| {\gamma _{N + 1} } \right|}}{{\left| z \right|^{N + 1} }}} \right) \begin{cases} \left|\csc\left(2\theta\right)\right| & \text{ if } \; \frac{\pi}{4} < \left|\theta\right| <\frac{\pi}{2} \\ 1 & \text{ if } \; \left|\theta\right| \leq \frac{\pi}{4}, \end{cases}
\end{equation}
where $\theta = \arg z$.
\end{theorem}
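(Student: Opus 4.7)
The plan is to start from the modified resurgence representations underlying Theorem \ref{thm1}, which express both $R_N(z)$ and $\widetilde R_N(z)$ as real integrals on $[0,\infty)$ of the schematic form
\[
\frac{1}{z^N}\int_0^\infty \frac{P_N(t)}{1+(t/z)^2}\,\mathrm{d}t + \frac{1}{z^{N+1}}\int_0^\infty \frac{Q_N(t)}{1+(t/z)^2}\,\mathrm{d}t,
\]
with nonnegative kernels $P_N,Q_N$ independent of $z$. Such representations are precisely what produces Theorem \ref{thm1} on the positive real axis: when $z>0$ the denominator is real and at least $1$, the two mean-value constants $\Theta_1,\Theta_2$ automatically lie in $(0,1)$, and letting $z\to +\infty$ in the resulting identities forces $\int_0^\infty P_N(t)\,\mathrm{d}t = |\gamma_N|$ and $\int_0^\infty Q_N(t)\,\mathrm{d}t = |\gamma_{N+1}|$.

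The only new ingredient needed to pass from $z>0$ to $\Re(z)>0$ is a uniform pointwise bound on $|1+(t/z)^2|^{-1}$. Writing $z=|z|e^{i\theta}$ and $u=(t/|z|)^2\ge 0$, one computes directly
\[
|1+(t/z)^2|^2 = (u+\cos 2\theta)^2 + \sin^2 2\theta.
\]
For $|\theta|\le \pi/4$ one has $\cos 2\theta \ge 0$, so $|1+(t/z)^2|\ge 1$. For $\pi/4<|\theta|<\pi/2$, the right-hand side, viewed as a quadratic in $u$, attains its minimum at $u=-\cos 2\theta>0$, where it equals $\sin^2 2\theta$; hence $|1+(t/z)^2|\ge |\sin 2\theta|$. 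Therefore $\bigl|(1+(t/z)^2)^{-1}\bigr|$ is pointwise bounded by the factor that will become the $K(\theta)$ of \eqref{eq10}.

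Combining these two ingredients, $|R_N(z)|$ is bounded by $K(\theta)$ times the integral obtained by replacing $z$ with $|z|$ in the schematic representation, and the latter equals $|\gamma_N|/|z|^N+|\gamma_{N+1}|/|z|^{N+1}$ by the identifications above. The same argument, applied to the analogous representation of $\widetilde R_N(z)$, handles the reciprocal case. The analytic heart of the proof is thus the elementary inequality on $|1+(t/z)^2|$; the main technical obstacle I expect is the clean bookkeeping of signs and conjugations, verifying in each of the four cases of Theorem \ref{thm1} that the non-denominator factors of the integrand have moduli equal to their real-axis values, so that the estimate collapses into the additive form stated in \eqref{eq10} rather than something weaker.
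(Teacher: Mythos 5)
Your proposal is correct and follows essentially the same route as the paper: it starts from the resurgence representations with the non-negative kernels $s^{k}e^{-2\pi s}\Re\Gamma^\ast(is)$ and $-s^{k}e^{-2\pi s}\Im\Gamma^\ast(is)$ (non-negativity being Lemma \ref{lemma1}), bounds $\left|1+(t/z)^2\right|^{-1}$ pointwise by $1$ or $\left|\csc(2\theta)\right|$ exactly as in the paper (where this elementary inequality is merely asserted, whereas you supply the short quadratic-in-$u$ verification), and identifies the resulting moment integrals with $\left|\gamma_N\right|$ and $\left|\gamma_{N+1}\right|$ via \eqref{eq7}--\eqref{eq8}. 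The only cosmetic difference is that you obtain these identifications by letting $z\to+\infty$ rather than from the relation $\gamma_N=z^N(\widetilde R_N(z)-\widetilde R_{N+1}(z))$; both are valid.
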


An asymptotic series for the logarithm of the gamma function which is analogous to \eqref{eq1} is given by
\begin{equation}\label{eq4}
\log \Gamma \left( z \right) \sim \left( {z - \frac{1}{2}} \right)\log z - z + \log \sqrt {2\pi } + \sum\limits_{n = 1}^\infty  {\frac{{B_{2n} }}{{2n\left( {2n - 1} \right)z^{2n - 1} }}} 
\end{equation}
as $z \to \infty$ in the sector $\left|\arg z\right| \leq \pi - \delta < \pi$, for any $0 < \delta \leq \pi$. Here $B_n$ stands for the $n$th Bernoulli number. Denoting by $r_N\left(z\right)$ the remainder after $N-1$ terms in this series, Lindel\"{o}f showed that
\[
\left| {r_N \left( z \right)} \right| \le \frac{\left|B_{2N}\right|}{{2N\left( {2N - 1} \right)\left| z \right|^{2N - 1} }} \begin{cases} \left|\csc\left(2\theta\right)\right| & \text{ if } \; \frac{\pi}{4} < \left|\theta\right| <\frac{\pi}{2} \\ 1 &  \text{ if } \; \left|\theta\right| \leq \frac{\pi}{4}, \end{cases}
\]
where $\theta = \arg z$ (see Remmert and Kay \cite[p. 67]{Remmert}). Also, if $z > 0$ is real then $r_N\left(z\right)$ is less than, but has the same sign as, the first neglected term (see, e.g., Temme \cite[p. 65]{Temme}). It is seen that our error bounds in Theorem \ref{thm1} and Theorem \ref{thm2} are the analogous of these results for the expansion \eqref{eq4}.

From the above remark on $r_N\left(z\right)$, it follows that for any $z>0$ we have $0 < r_1 \left( z \right) < \frac{B_2 }{2z} = \frac{1}{12z}$, whence
\[
1 < \frac{\Gamma \left( z \right)}{\sqrt {2\pi } z^{z - \frac{1}{2}} e^{ - z}} < e^{\frac{1}{12z}} = 1 + \frac{1}{12z} + \frac{1}{288z^2} + \frac{1}{10368z^3} +  \cdots.
\]
This is a well-known inequality (see, e.g., \cite[p. 138, equation 5.6.1]{NIST}). By Theorem \ref{thm1} we can improve the upper bound to
\[
1 < \frac{\Gamma \left( z \right)}{\sqrt {2\pi } z^{z - \frac{1}{2}} e^{ - z}} < 1 + \frac{1}{12z} + \frac{1}{288z^2}
\]
for any $z>0$; which is, as far as we know, a new identity. Thus, we have a simple estimate for the gamma function on the positive real line.

By the leading order behaviour of the Stirling coefficients (see \cite[p. 33]{Paris3}), we readily establish that the right-hand side of \eqref{eq10} is asymptotic to
\[
\left(\frac{{\left( {1 + \zeta \left( N \right)} \right)\Gamma \left( N \right)}}{{\left( {2\pi } \right)^{N + 1} \left| z \right|^N }} + \frac{\pi }{{6N}}\frac{{\left( {1 + \zeta \left( N+1 \right)} \right)\Gamma \left( {N + 1} \right)}}{{\left( {2\pi } \right)^{N + 2} \left| z \right|^{N + 1} }}\right) \begin{cases} \left|\csc\left(2\theta\right)\right| & \text{ if } \; \frac{\pi}{4} < \left|\theta\right| <\frac{\pi}{2} \\ 1 & \text{ if } \; \left|\theta\right| \leq \frac{\pi}{4}, \end{cases}
\]
for odd $N$, and
\[
 \left(\frac{\pi }{{6N}}\frac{{\left( {1 + \zeta \left( N \right)} \right)\Gamma \left( N \right)}}{{\left( {2\pi } \right)^{N + 1} \left| z \right|^N }} + \frac{{\left( {1 + \zeta \left( N+1 \right)} \right)\Gamma \left( {N + 1} \right)}}{{\left( {2\pi } \right)^{N + 2} \left| z \right|^{N + 1} }}\right)\begin{cases} \left|\csc\left(2\theta\right)\right| & \text{ if } \; \frac{\pi}{4} < \left|\theta\right| <\frac{\pi}{2} \\ 1 & \text{ if } \; \left|\theta\right| \leq \frac{\pi}{4}, \end{cases}
\]
for even $N$. Since $1 < \frac{1}{2}\left( {\sec \theta  + 1} \right)$ if $0<\left|\theta\right| \leq \frac{\pi}{4}$, and $\left| {\csc \left( {2\theta } \right)} \right| < \frac{1}{2}\left( {\sec \theta  + 1} \right)$ if $\frac{\pi}{4} < \left|\theta\right| <\frac{\pi}{2}$, we infer that our bounds \eqref{eq10} are better than the bound \eqref{eq3} of Boyd if $N$ and $z$ are large and $\arg z$ is not too close to the imaginary axis.

When $\left|\arg z\right|$ is close to $\frac{\pi}{2}$, the error bound \eqref{eq3} becomes 
\begin{equation}\label{eq19}
\left| {R_1 \left( z \right)} \right| \le \frac{3}{2\pi ^2 \left| z \right|} \; \text{ and } \;
\left| {R_N \left( z \right)} \right| \le \frac{{\left( {1 + \zeta \left( N \right)} \right)\Gamma \left( N \right)}}{{\left( {2\pi } \right)^{N + 1} \left| z \right|^N }}\frac{{2\sqrt N  + 1}}{2} \; \text{ for } \; N \geq 2.
\end{equation}
Boyd does not actually prove these bounds, just mentions that the proof is similar to the proof of his bound for the error term of the large argument asymptotics of the Bessel function $K_\nu \left(z\right)$ given in an earlier paper of his \cite{Boyd3}. Up to the first few steps we can indeed mimic the proof presented in \cite{Boyd3}, but at one point we need non-trivial estimates for the gamma function along certain rays of the complex plane. Nevertheless, we shall give a possible proof of \eqref{eq19} for $N \geq 2$. The case $N=1$ remains unproved, though it is uninteresting for practical applications.

\begin{theorem}\label{thm3} Suppose that $N\geq 2$. If $\left|\arg z\right| \le \frac{\pi}{2}$, then
\[
\left| {R_N \left( z \right)} \right|,\left| {\widetilde R_N \left( z \right)} \right| \le \frac{{\left( {1 + \zeta \left( N \right)} \right)\Gamma \left( N \right)}}{{\left( {2\pi } \right)^{N + 1} \left| z \right|^N }}\frac{{2\sqrt N  + 1}}{2}.
\]
\end{theorem}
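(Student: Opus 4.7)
My approach is to derive the bound from the modified resurgence representation of $R_N(z)$ and $\widetilde R_N(z)$ established earlier in the paper. In that representation each remainder appears as a Laplace-type integral with a rational factor $1/(1+(t/z)^2)$ and a kernel whose integration against the standard weight on the positive real axis reproduces the factor $(1+\zeta(N))\Gamma(N)/(2\pi)^{N+1}$. Since the representations for $R_N$ and $\widetilde R_N$ share the same rational denominator and their kernels satisfy identical absolute-value bounds, I would estimate both simultaneously.

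Following Boyd's treatment of $K_\nu(z)$ in \cite{Boyd3}, the key idea is to rotate the ray of integration from the positive real axis to $\arg t = \phi$ for some $\phi\in(0,\pi/2)$ to be chosen. For $|\theta|<\pi/2$ strict, the poles $t=\pm iz$ (with arguments $\theta\pm\pi/2$) do not lie in the sector $0<\arg t<\phi$ swept by the rotation, so Cauchy's theorem legitimizes the deformation. The boundary case $|\theta|=\pi/2$, where the pole $t=\mp iz$ touches the positive real axis, is recovered by continuity of $R_N(z)$ and $\widetilde R_N(z)$ on the closed sector. Along the rotated ray $t=se^{i\phi}$, the exponential contributes $e^{-2\pi s\cos\phi}$, and the elementary estimate
\[
\bigl|1+re^{i\psi}\bigr|^2 = (r+\cos\psi)^2 + \sin^2\psi \geq \sin^2\psi,
\]
applied with $r=(s/|z|)^2$ and $\psi=2\phi-2\theta$, yields $|1+(t/z)^2|\geq \sin 2\phi$ in the worst case $|\theta|=\pi/2$.

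Combining these bounds with the uniform kernel estimate described below, the rotated integral is dominated by
\[
\frac{(1+\zeta(N))\Gamma(N)}{(2\pi)^{N+1}|z|^N}\cdot\frac{1}{2\sin\phi\,\cos^{N+1}\phi}.
\]
The choice $\tan^2\phi=1/(N+1)$ minimizes the last factor, reducing it to $\tfrac{1}{2}\sqrt{N+2}\,(1+1/(N+1))^{(N+1)/2}\leq \tfrac{1}{2}\sqrt{e(N+2)}$. A short computation shows that this is $\leq(2\sqrt{N}+1)/2$ for every $N\geq 2$, which is the claimed factor. The argument for $\widetilde R_N(z)$ runs in parallel, since the integrand in its representation differs from that of $R_N(z)$ only by a sign that is immaterial to the absolute value.

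The main obstacle is the uniform bound on the kernel along the rotated ray. On the positive real axis such a bound is straightforward (it is what underlies the proof of Theorem \ref{thm2}); on a ray $\arg t=\phi\neq 0$, however, one encounters values of the gamma function at complex arguments of the form $a+ib$ with $b\neq 0$. I would control these by combining the identity
\[
|\Gamma(a+ib)|^2 = |\Gamma(a)|^2\prod_{k=0}^{\infty}\left(1+\frac{b^2}{(a+k)^2}\right)^{-1}
\]
with classical Stirling-type bounds along vertical strips, showing that the kernel on the rotated ray is dominated by its real-axis bound up to an absolute multiplicative constant absorbable into the final factor. This is precisely the "non-trivial estimate for the gamma function along certain rays" mentioned in the introduction, and is the step that requires genuinely new input beyond \cite{Boyd3}.
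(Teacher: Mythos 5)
Your outline coincides with the paper's strategy in broad terms (rotate the ray of integration by an angle $\varphi$ of size roughly $N^{-1/2}$, pay a factor $\cos^{-N}\varphi$ times a cosecant from the pole, and optimise over $\varphi$), but the step you yourself flag as ``the main obstacle'' --- a usable bound for the kernel on the rotated ray --- is precisely the step your sketch does not supply, and the route you propose for it would not work. On the rotated ray the kernel is the \emph{scaled} gamma function evaluated at $w=is$ with $\arg w=\tfrac{\pi}{2}+\varphi>\tfrac{\pi}{2}$, i.e.\ at points with \emph{negative} real part; the identity $|\Gamma(a+ib)|^2=|\Gamma(a)|^2\prod_{k\ge0}\bigl(1+b^2/(a+k)^2\bigr)^{-1}$ requires $a>0$, and in any case it controls $\Gamma$ rather than $\Gamma^\ast$: near the imaginary axis the decay of $|\Gamma|$ is exactly cancelled by the normalising factor $|z^{z-1/2}e^{-z}|$, so nothing about $\Gamma^\ast$ follows from it. The paper's resolution is Lemma \ref{lemma2}: the reflection formula $\Gamma^\ast(w)=\frac{1}{1-e^{2\pi iw}}\frac{1}{\Gamma^\ast(we^{-\pi i})}$ sends the argument back into the right half-plane, where a Phragm\'en--Lindel\"of argument gives $|1/\Gamma^\ast|\le1$; with the parametrisation $s=te^{i\varphi}/\cos\varphi$ (chosen so that $\Re s=t$) this yields $|\Gamma^\ast(ite^{i\varphi}/\cos\varphi)|\le(1-e^{-2\pi t})^{-1}$ and hence the exactly computable integral $\frac{1}{2\pi}\int_0^{\infty}t^{N-1}e^{-2\pi t}(1-e^{-2\pi t})^{-1}dt=\zeta(N)\Gamma(N)/(2\pi)^{N+1}$ for $N\ge3$ (with a sharper numerical treatment at $N=2$). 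This is the genuinely new input, and it is absent from your argument; moreover your claim that the rotated kernel is dominated by its real-axis bound ``up to an absolute multiplicative constant absorbable into the final factor'' cannot be absorbed even if true, because your target inequality $\sqrt{e(N+2)}\le2\sqrt N+1$ reads $3.30\le3.83$ at $N=2$ and tolerates no extra factor beyond about $1.16$.

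Two further corrections. First, the representation with denominator $1+(t/z)^2$ (the paper's \eqref{eq9}) has an integrand built from $\Re\Gamma^\ast(is)$ and $\Im\Gamma^\ast(is)$, which are not holomorphic in $s$, so its contour cannot be rotated; one must return to the two-integral form \eqref{eq15}, reduce to $0\le\theta\le\tfrac{\pi}{2}$ by conjugation, and rotate \emph{only} the integral whose pole (at $\arg s=\theta-\tfrac{\pi}{2}$) approaches the positive axis --- the other integral satisfies $|1+is/z|\ge1$ there and is left alone. Second, the bookkeeping is then necessarily asymmetric: the unrotated integral contributes $\tfrac12(1+\zeta(N))\Gamma(N)/(2\pi)^{N+1}$ with factor $1$, the rotated one contributes $\zeta(N)\Gamma(N)/(2\pi)^{N+1}$ multiplied by $\sec(\theta-\varphi)\cos^{-N}\varphi\le(1+1/N)^{(N+1)/2}\sqrt N$ at $\varphi=\arctan(N^{-1/2})$, and the proof closes with the inequality $\frac{2\zeta(N)}{1+\zeta(N)}(1+1/N)^{(N+1)/2}<2$ for $N\ge2$, not with the one you propose.
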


Through this paper we will use frequently the concept of the scaled gamma function $\Gamma^{\ast} \left( z \right)$ which is defined by
\[
\Gamma^{\ast} \left( z \right) = \frac{{\Gamma \left( z \right)}}{{\sqrt {2\pi } z^{z - \frac{1}{2}} e^{ - z} }}
\]
for $\left|\arg z\right| < \pi$. The asymptotic series \eqref{eq1}, \eqref{eq2} and the error bounds can be extended to other sectors of the complex plane via the continuation formulas
\begin{equation}\label{eq28}
\Gamma^\ast \left( z \right) = \frac{1}{1 - e^{ \pm 2\pi iz}}\frac{1}{\Gamma^\ast \left( {ze^{ \mp \pi i} } \right)}
\; \text{ and } \; \Gamma^\ast  \left( z \right) =  - e^{ \pm 2\pi iz} \Gamma^\ast  \left( {ze^{ \pm 2\pi i} } \right) .
\end{equation}
Lines of the form $\arg z = \left( {2m \pm \frac{1}{2}} \right)\pi$, where $m\in \mathbb{Z}$, are the Stokes lines for the gamma function and its reciprocal.

In the next theorem we give exponentially improved asymptotic expansions for the gamma function and its reciprocal. The expansion for the gamma function can be viewed as the mathematically rigorous form of the terminated expansion of Dingle \cite[pp. 461--462]{Dingle}. We express these expansions in terms of the Terminant function $\widehat T_p\left(w\right)$ whose definition and basic properties are given in Section \ref{section3}. Throughout this paper, empty sums are taken to be zero.

\begin{theorem}\label{thm4} Suppose that $-\frac{3\pi}{2} \le \arg z \le \frac{3\pi}{2}$, $\left|z\right|$ is large and $N = 2\pi\left|z\right| + \rho$ is a positive integer with $\rho$ being bounded. Then
\begin{equation}\label{eq48}
R_N \left( z \right) = e^{2\pi iz} \sum\limits_{m = 0}^{M - 1} {\left( { - 1} \right)^m \frac{{\gamma _m }}{{z^m }}\widehat T_{N - m} \left( {2\pi iz} \right)}  - e^{ - 2\pi iz} \sum\limits_{m = 0}^{M - 1} {\left( { - 1} \right)^m \frac{{\gamma _m }}{{z^m }}\widehat T_{N - m} \left( { - 2\pi iz} \right)}  + R_{N,M} \left( z \right)
\end{equation}
and
\begin{equation}\label{eq30}
\widetilde R_N \left( z \right) =  - e^{2\pi iz} \sum\limits_{m = 0}^{M - 1} {\frac{{\gamma _m }}{{z^m }}\widehat T_{N - m} \left( {2\pi iz} \right)}  + e^{ - 2\pi iz} \sum\limits_{m = 0}^{M - 1} {\frac{{\gamma _m }}{{z^m }}\widehat T_{N - m} \left( { - 2\pi iz} \right)}  + \widetilde R_{N,M} \left( z \right),
\end{equation}
with $M\geq 0$ being an arbitrary fixed integer, and
\begin{equation}\label{eq29}
R_{N,M} \left( z \right),\widetilde R_{N,M} \left( z \right) = \mathcal{O}_{M,\rho } \left( {\frac{{e^{ - 2\pi \left| z \right|} }}{{\left| z \right|^M }}} \right)
\end{equation}
for $\left|\arg z\right| \le \frac{\pi}{2}$;
\[
R_{N,M} \left( z \right) = \mathcal{O}_{M,\rho } \left( {e^{ \mp 2\pi \Im \left( z \right)} \left( {\frac{1}{{\left| {1 - e^{ \mp 2\pi iz} } \right|}} + \frac{1}{{\left| z \right|^M }}} \right)} \right) \; \text{ and } \; \widetilde R_{N,M} \left( z \right) = \mathcal{O}_{M,\rho } \left( {\frac{{e^{ \mp 2\pi \Im \left( z \right)} }}{{\left| z \right|^M }}} \right)
\]
for $\frac{\pi}{2} \le \pm \arg z \le \frac{3\pi}{2}$.
\end{theorem}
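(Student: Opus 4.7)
My plan is to start from the modified resurgence representation for $R_N(z)$ and $\widetilde R_N(z)$ developed in the earlier sections of the paper, which expresses each remainder as a Hankel-type integral along a ray emanating from the origin with simple singularities of the integrand located at $s = \pm 2\pi i$. The two conjugate singularities generate two contributions whose natural exponential factors are $e^{\pm 2\pi iz}$, which accounts for the structure of both \eqref{eq48} and \eqref{eq30}.

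The first step is to re-expand the regular part of the integrand near each singularity as a geometric series of length $M$ with an explicit remainder. Interchanging the finite sum with integration, the $m$th principal term is recognised, via the integral representation of $\widehat T_p$ to be recorded in Section \ref{section3}, as $\widehat T_{N-m}(\pm 2\pi iz)$ with coefficient a constant multiple of $\gamma_m/z^m$; the signs $(-1)^m$ entering \eqref{eq48} but not \eqref{eq30} simply reflect the difference between the alternating series \eqref{eq1} and the non-alternating series \eqref{eq2}. This yields \eqref{eq48} and \eqref{eq30} together with closed-form integral expressions for $R_{N,M}(z)$ and $\widetilde R_{N,M}(z)$.

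Next I would estimate these remainder integrals for $|\arg z| \le \pi/2$. The integrand now contains a factor of the form $\Gamma(N-M+t)/(2\pi|z|)^{N-M+t}$ multiplied by a function bounded uniformly in $z$; choosing $N = 2\pi|z|+\rho$ with $\rho$ bounded and applying Stirling's formula to $\Gamma(N-M)$ produces an overall factor of order $e^{-2\pi|z|}|z|^{-M}$, with an implicit constant depending only on $M$ and $\rho$. For the sectors $\pi/2 \le \pm\arg z \le 3\pi/2$ I would invoke the continuation formulas \eqref{eq28}. Writing $\Gamma^\ast(z) = (1-e^{\mp 2\pi iz})^{-1}/\Gamma^\ast(ze^{\mp\pi i})$, subtracting the partial sum defining $R_N(z)$ on both sides and inserting the already-established expansion on the rotated variable $ze^{\mp \pi i}$ (which lies in the principal sector) produces the prefactor $|1-e^{\mp 2\pi iz}|^{-1}$ in the bound for $R_{N,M}$, while for $\widetilde R_{N,M}$ the corresponding argument is cleaner since the second relation in \eqref{eq28} carries no Stokes multiplier.

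The principal obstacle will be the uniform estimation of the remainder integrals as $\arg z$ crosses the Stokes lines $\arg z = \pm\pi/2$. On those lines each individual terminant in \eqref{eq48} and \eqref{eq30} undergoes its smooth Stokes transition, and the exponential factors $e^{\pm 2\pi iz}$ are of unit modulus, so the cancellations between the two conjugate sums are delicate. Controlling the error uniformly there requires sharp estimates for $\widehat T_p(w)$ in the transition region $|w|\approx p$, together with a careful deformation of the integration contour to keep it clear of the saddle at $s = \pm 2\pi i$; the necessary technical tools will be supplied by the properties of the Terminant function collected in Section \ref{section3}.
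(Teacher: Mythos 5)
Your overall strategy coincides with the paper's: substitute the truncated expansion of $\Gamma^\ast(\pm is)$ into the resurgence integrals \eqref{eq15}, recognise the resulting principal integrals as $e^{\pm 2\pi iz}\widehat T_{N-m}(\pm 2\pi iz)$, bound the leftover double integral, and continue into $\frac{\pi}{2}\le\pm\arg z\le\frac{3\pi}{2}$ by picking up the residue as the pole crosses the contour (equivalently, via the connection formulas \eqref{eq28}). The outer-sector argument you sketch is essentially the one used in the paper and is fine as a plan.

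The gap is in the estimate \eqref{eq29} on the \emph{closed} sector $\left|\arg z\right|\le\frac{\pi}{2}$. You assert that, after extracting $\Gamma(N-M+t)/(2\pi|z|)^{N-M+t}$, what remains is ``a function bounded uniformly in $z$''. It is not: the kernel $1/(1-is/z)$ in the first integral of \eqref{eq21} has a pole at $s=-iz$, which lands on the integration path $s>0$ exactly when $\arg z=\frac{\pi}{2}$ (and $1/(1+is/z)$ does the same at $\arg z=-\frac{\pi}{2}$). Consequently the crude bound $|R_M(\pm is)|\le C_M s^{-M}$ followed by Stirling's formula gives \eqref{eq29} only on $\left|\arg z\right|\le\frac{\pi}{2}-\varepsilon$, with a constant that blows up as $\varepsilon\to 0$ — precisely where the theorem is needed for the Stokes analysis. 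You flag this obstacle at the end but offer no mechanism to overcome it. The paper's resolution is concrete and is the heart of the proof: one inserts the rotated-contour representation \eqref{eq22} of $R_M(ir\tau)$ and uses the algebraic splitting
\[
\frac{1}{1-te^{i\varphi }/r\tau } = \frac{1}{1-te^{i\varphi }/r} + \left( \tau -1 \right)\frac{1}{\left( 1-r\tau /te^{i\varphi } \right)\left( 1-te^{i\varphi }/r \right)}
\]
so that the dangerous factor $1/(1-i\tau e^{-i\theta})$ only ever multiplies the pure integral $\int_0^{+\infty}\tau^{N-M-1}e^{-2\pi r\tau}(1-i\tau e^{-i\theta})^{-1}d\tau$, which is itself a terminant $e^{2\pi iz}\widehat T_{N-M}(2\pi iz)$ (up to explicit factors) and is therefore uniformly exponentially small on the whole closed sector by Olver's estimate \eqref{eq25}; all remaining kernels are bounded by $1$, $\csc\varphi$ or $\csc^2\varphi$, and Lemma \ref{lemma2} controls $\Gamma^\ast$ on the rotated ray. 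This yields the explicit bound of Theorem \ref{thm5}, from which \eqref{eq29} follows. Without this (or an equivalent device isolating the pole's contribution as a terminant), your middle step does not deliver the uniform bound, and the subsequent continuation to the outer sectors — which quotes that bound at $ze^{\mp\pi i}$ — inherits the gap.
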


The expansion \eqref{eq48} without the error term $R_{N,M} \left( z \right)$ was also derived by Boyd, but he mistakenly gave the sign of the factor $e^{ - 2\pi iz}$ to be positive.

For exponentially improved asymptotic expansions using Hadamard series, see Paris \cite{Paris4}, \cite[pp. 156--159]{Paris}.

While proving Theorem \ref{thm4} in Section \ref{section3}, we also obtain the following explicit bounds for the remainders in \eqref{eq48} and \eqref{eq30}. Note that in this theorem $N$ may not depend on $z$.

\begin{theorem}\label{thm5}
For any integers $2 \leq M < N$, define the remainders $R_{N,M} \left( z \right)$ and $\widetilde R_{N,M} \left( z \right)$ by the equations \eqref{eq48} and \eqref{eq30}, respectively. Then we have
\begin{multline}\label{eq54}
\left| {R_{N,M} \left( z \right)} \right|, \left| {\widetilde R_{N,M} \left( z \right)} \right| \le  \left( {6M + 2} \right)\frac{{\zeta \left( M \right)\Gamma \left( M \right)\Gamma \left( {N - M} \right)}}{{\left( {2\pi } \right)^{N + 2} \left| z \right|^N }}\\
+ \left( {2\sqrt M  + 1} \right)\frac{{\zeta \left( M \right)\Gamma \left( M \right)}}{{\left( {2\pi } \right)^{M + 1} \left| z \right|^M }}\left( {\left| {e^{2\pi iz} \widehat T_{N - M} \left( {2\pi iz} \right)} \right| + \left| {e^{ - 2\pi iz} \widehat T_{N - M} \left( { - 2\pi iz} \right)} \right|} \right),
\end{multline}
provided that $\left|\arg z\right| \leq \frac{\pi}{2}$.
\end{theorem}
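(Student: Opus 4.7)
The plan is to derive \eqref{eq54} directly from the modified resurgence representation of $R_N(z)$ and $\widetilde R_N(z)$ that will be established earlier in Section \ref{section3} in the course of proving Theorem \ref{thm4}. That representation writes each remainder as a sum of two contour integrals taken along the rays $\arg t = \arg z \pm \pi$, whose integrands involve $\Gamma^{\ast}(\cdot)^{\pm 1}$ (continued through \eqref{eq28}) multiplied by the exponential factors $e^{\pm 2\pi i t}$ and by a Cauchy-type kernel that integrates to the Terminant function $\widehat T_p$.

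First, I would substitute into each ray integral the truncated asymptotic expansion of $1/\Gamma^{\ast}$ or $\Gamma^{\ast}$ to order $M$, with explicit remainders $\widetilde R_M$ or $R_M$. Direct evaluation of the leading $M$ integrals reproduces exactly the Terminant sums of \eqref{eq48} and \eqref{eq30}; what remains, by definition of $R_{N,M}(z)$ and $\widetilde R_{N,M}(z)$, is a single pair of ray integrals whose integrands carry the inner remainders $\widetilde R_M$ or $R_M$ evaluated at the continued argument.

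Second, under the hypothesis $|\arg z|\leq \pi/2$, the continuation formulas \eqref{eq28} place those inner arguments back in the half-plane $|\arg(\cdot)|\leq \pi/2$, so that Theorem \ref{thm3} applies and bounds the inner remainders uniformly by $(2\sqrt M + 1)\zeta(M)\Gamma(M)/(2(2\pi)^{M+1}|t|^M)$. Factoring this constant out of each ray integral reduces the estimation to a pair of explicit scalar integrals. These split naturally into a \emph{dominant} part whose modulus equals $|e^{\pm 2\pi i z}\widehat T_{N-M}(\pm 2\pi i z)|$ — producing the second line of \eqref{eq54} — and a \emph{correction} part in which the Cauchy kernel $|1\mp t/z|^{-1}$ is bounded by a constant, the residual integral collapsing via a routine Gamma-function computation to a quantity of order $\Gamma(N-M)/|z|^N$ that, after a careful bookkeeping of the constants from the two rays, yields the coefficient $6M+2$ of the first line.

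The principal obstacle that I anticipate is not any single estimate but the careful tracking of the integration contours as they are rotated through the continuation formulas \eqref{eq28}. In particular, one has to verify that the hypotheses of Theorem \ref{thm3} remain satisfied uniformly on the whole sector $|\arg z|\leq \pi/2$, including on the Stokes boundaries $\arg z = \pm\pi/2$ themselves, where a continuity argument (obtained by first proving the bound in the open sector and then letting $\arg z\to \pm\pi/2$) is required. Once this bookkeeping is in place, the rest of the proof reduces to bounding two explicit scalar integrals and collecting the absolute constants into the form displayed in \eqref{eq54}.
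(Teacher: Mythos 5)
Your overall skeleton matches the paper's: insert the order-$M$ truncation of $\Gamma^\ast(\pm is)$ into Boyd's resurgence formula \eqref{eq15}, identify the leading integrals as Terminant functions, and then estimate the left-over integrals carrying the inner remainders $R_M(\pm is)$. The gap is in your second step, and it is fatal to the bound as stated. Once you replace $R_M(\pm is)$ by a pointwise modulus bound of the form $C_M s^{-M}$ (via Theorem \ref{thm3}), the modulus sits \emph{inside} the outer integral, and what you are left with is $\int_0^{+\infty} s^{N-M-1}e^{-2\pi s}\left|1 \mp is/z\right|^{-1}ds$. This is not $\left|\mathrm{const}\cdot e^{\pm 2\pi iz}\widehat T_{N-M}\left(\pm 2\pi iz\right)\right|$ --- the Terminant function is the modulus \emph{of} the integral, not the integral of the modulus --- so the second line of \eqref{eq54} cannot be produced this way. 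Worse, on the Stokes lines $\arg z=\pm\frac{\pi}{2}$ (which the theorem includes) the pole of the Cauchy kernel lies on the integration path, $\left|1-is/z\right|^{-1}$ is not bounded by any constant, and the integral of the modulus diverges logarithmically; the continuity argument you invoke cannot rescue this, because continuity only transfers a bound that is uniform up to the boundary of the open sector, and yours blows up as $\arg z\to\pm\frac{\pi}{2}$.

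What is missing is an exact, not modulus-level, decomposition of the inner remainder. Writing $R_M(ir\tau)$ via its own rotated resurgence integral \eqref{eq22}, the paper splits the inner Cauchy kernel as
\[
\frac{1}{1-te^{i\varphi }/(r\tau)}=\frac{1}{1-te^{i\varphi }/r}+\left( {\tau - 1} \right)\frac{1}{\left( {1 - r\tau /te^{i\varphi } } \right)\left( {1 - te^{i\varphi } /r} \right)}.
\]
The first piece makes $R_M(ir\tau)$ equal to a $\tau$-independent complex constant times $\tau^{-M}$, so the outer $\tau$-integral can be evaluated exactly as $e^{\pm2\pi iz}\widehat T_{N-M}\left(\pm2\pi iz\right)$ \emph{before} any modulus is taken; that is what yields the factors $\left|e^{\pm 2\pi iz}\widehat T_{N-M}\left(\pm 2\pi iz\right)\right|$ in \eqref{eq54}. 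The second piece carries the factor $\tau-1$, which exactly cancels the Stokes-line singularity via $\left|\left(\tau-1\right)/\left(\tau+ie^{i\theta}\right)\right|\le 1$ and then collapses to the $\Gamma(N-M)/\left|z\right|^N$ term. The constants $2\sqrt M+1$ and $6M+2$ come from choosing $\varphi=\arctan\left(M^{-1/2}\right)$ in the inner rotation and from the bound $\frac{1}{2\pi}\int_0^{+\infty}t^{M-1}e^{-2\pi t}\left|\Gamma^\ast\left(\cdot\right)\right|dt\le \zeta(M)\Gamma(M)/(2\pi)^{M+1}$ supplied by Lemma \ref{lemma2} (note $\zeta(M)$, not the $1+\zeta(M)$ your appeal to Theorem \ref{thm3} would produce). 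Without some such exact splitting your argument neither reaches the Stokes lines nor produces the claimed form of the bound.
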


The rest of the paper is organised as follows. In Section \ref{section2}, we prove the error bounds stated in Theorems \ref{thm1}--\ref{thm3}. In the first part of Section \ref{section3}, we prove the exponentially improved expansions given in Theorem \ref{thm4} and the error bounds given in Theorem \ref{thm5}. In the second part, we reveal some interesting facts about the Stokes phenomenon for the gamma function and its reciprocal. We also discuss the smooth transition of the Stokes discontinuities. In Section \ref{section4}, we derive new asymptotic approximations for the Stirling coefficients $\gamma_n$ and compare their numerical efficacy with the earlier results of Dingle and Boyd.

\section{Proofs of the error bounds}\label{section2}

Recall that for any $N\geq 1$ the remainder terms $R_N \left( z \right)$ and $\widetilde R_N \left( z \right)$ are defined by
\[
\Gamma^\ast \left( z \right) = \sum\limits_{n = 0}^{N - 1} {\left( { - 1} \right)^n \frac{{\gamma _n }}{{z^n }}}  + R_N \left( z \right) \; \text{ and } \; \frac{1}{{\Gamma^\ast \left( z \right)}} = \sum\limits_{n = 0}^{N - 1} {\frac{{\gamma _n }}{{z^n }}}  + \widetilde R_N \left( z \right) .
\]
Suppose that $\Re\left(z\right)>0$. Boyd's resurgence formulas \cite[equations (2.14) and (4.2)]{Boyd2} can be written in the forms
\begin{equation}\label{eq15}
R_N \left( z \right) = \frac{1}{{2\pi i}}\frac{{i^N }}{{z^N }}\int_0^{ + \infty } {\frac{{s^{N - 1} e^{ - 2\pi s} \Gamma^\ast  \left( {is} \right)}}{{1 - is/z}}ds}  - \frac{1}{{2\pi i}}\frac{{\left( { - i} \right)^N }}{{z^N }}\int_0^{ + \infty } {\frac{{s^{N - 1} e^{ - 2\pi s} \Gamma^\ast  \left( { - is} \right)}}{{1 + is/z}}ds} 
\end{equation}
and
\[
\widetilde R_N \left( z \right) = \frac{1}{{2\pi i}}\frac{{\left( { - i} \right)^N }}{{z^N }}\int_0^{ + \infty } {\frac{{s^{N - 1} e^{ - 2\pi s} \Gamma^\ast \left( {is} \right)}}{{1 + is/z}}ds}  - \frac{1}{{2\pi i}}\frac{{i^N }}{{z^N }}\int_0^{ + \infty } {\frac{{s^{N - 1} e^{ - 2\pi s} \Gamma^\ast  \left( { - is} \right)}}{{1 - is/z}}ds} .
\]
We remark that he stated the formula $\widetilde R_N \left( z \right)$ only for $N=1$, but the formula for the general $N$ follows easily from it. Using the property $\Gamma^{\ast} \left( {\bar z} \right) = \overline {\Gamma^{\ast} \left( z \right)}$, we deduce that
\begin{equation}\label{eq9}
\left( { - 1} \right)^{N + 1} R_{2N - 1} \left( z \right) = \frac{1}{{\pi z^{2N - 1} }}\int_0^{ + \infty } {\frac{{s^{2N - 2} e^{ - 2\pi s} \Re \Gamma ^{\ast}  \left( {is} \right)}}{{1 + \left( {s/z} \right)^2 }}ds}  - \frac{1}{{\pi z^{2N} }}\int_0^{ + \infty } {\frac{{s^{2N - 1} e^{ - 2\pi s} \Im \Gamma ^{\ast}  \left( {is} \right)}}{{1 + \left( {s/z} \right)^2 }}ds} ,
\end{equation}
\[
\left( { - 1} \right)^{N + 1} R_{2N} \left( z \right) = -\frac{1}{{\pi z^{2N} }}\int_0^{ + \infty } {\frac{{s^{2N - 1} e^{ - 2\pi s} \Im \Gamma ^{\ast}  \left( {is} \right)}}{{1 + \left( {s/z} \right)^2 }}ds} - \frac{1}{{\pi z^{2N + 1} }}\int_0^{ + \infty } {\frac{{s^{2N} e^{ - 2\pi s} \Re \Gamma ^{\ast}  \left( {is} \right)}}{{1 + \left( {s/z} \right)^2 }}ds},
\]
\[
\left( { - 1} \right)^N \widetilde R_{2N - 1} \left( z \right) = \frac{1}{{\pi z^{2N - 1} }}\int_0^{ + \infty } {\frac{{s^{2N - 2} e^{ - 2\pi s} \Re \Gamma ^{\ast}  \left( {is} \right)}}{{1 + \left( {s/z} \right)^2 }}ds}  + \frac{1}{{\pi z^{2N} }}\int_0^{ + \infty } {\frac{{s^{2N - 1} e^{ - 2\pi s} \Im \Gamma ^{\ast}  \left( {is} \right)}}{{1 + \left( {s/z} \right)^2 }}ds} ,
\]
\[
\left( { - 1} \right)^{N+1} \widetilde R_{2N} \left( z \right) = -\frac{1}{{\pi z^{2N} }}\int_0^{ + \infty } {\frac{{s^{2N - 1} e^{ - 2\pi s} \Im \Gamma ^{\ast}  \left( {is} \right)}}{{1 + \left( {s/z} \right)^2 }}ds} + \frac{1}{{\pi z^{2N + 1} }}\int_0^{ + \infty } {\frac{{s^{2N} e^{ - 2\pi s} \Re \Gamma ^{\ast}  \left( {is} \right)}}{{1 + \left( {s/z} \right)^2 }}ds},
\]
for any $N\geq 1$. These are the suitable forms of the remainders to obtain the realistic error bounds stated in Theorem \ref{thm1} and Theorem \ref{thm2}. From these and the formula $\gamma _N  = z^N \left( \widetilde R_N \left( z \right) - \widetilde R_{N + 1} \left( z \right) \right)$, we infer that
\begin{equation}\label{eq7}
\left( { - 1} \right)^N \gamma _{2N - 1}  = \frac{1}{\pi}\int_0^{ + \infty } {s^{2N - 2} e^{ - 2\pi s} \Re \Gamma ^{\ast}  \left( {is} \right)ds} 
\end{equation}
and
\begin{equation}\label{eq8}
\left( { - 1} \right)^{N+1} \gamma _{2N}  = -\frac{1}{\pi}\int_0^{ + \infty } {s^{2N - 1} e^{ - 2\pi s} \Im \Gamma ^{\ast}  \left( {is} \right)ds} ,
\end{equation}
for all $N\geq 1$. To complete the proof of Theorem \ref{thm1} and Theorem \ref{thm2}, we need the following lemma.

\begin{lemma}\label{lemma1} For any $s>0$ it holds that $\Re \Gamma^\ast  \left( {is} \right)\geq 0$ and $- \Im \Gamma^\ast  \left( {is} \right) \ge 0$.
\end{lemma}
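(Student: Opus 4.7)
The plan is to obtain an explicit integral representation for $\arg\Gamma^{\ast}(is)$ and read off both sign statements from it. The starting point is Binet's second formula $\log\Gamma^{\ast}(z) = 2\int_0^\infty \arctan(t/z)/(e^{2\pi t}-1)\,dt$, valid for $\Re(z)>0$. Since $\Gamma^{\ast}$ has no zeros or poles on the positive imaginary axis---indeed $|\Gamma^{\ast}(is)|^2 = 1/(1-e^{-2\pi s})$, an immediate consequence of the continuation formula \eqref{eq28} applied at $z=is$---the function $\log\Gamma^{\ast}(z)$ extends continuously to $z=is$, and one may pass to that boundary.

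Next I would rewrite $\arctan(w) = (2i)^{-1}\log((1+iw)/(1-iw))$ with $w=t/z$ and track the principal branch of the logarithm as $z\to is$ through $\Re(z)>0$. For $t<s$ the argument of the logarithm approaches the positive real axis, giving $\arctan(t/(is)) = -(i/2)\log((s+t)/(s-t))$; for $t>s$ it approaches the negative real axis from above, giving $\arctan(t/(is)) = \pi/2-(i/2)\log((s+t)/(t-s))$. Substituting, the real part of $\log\Gamma^{\ast}(is)$ reduces to $\pi\int_s^\infty dt/(e^{2\pi t}-1) = -\tfrac{1}{2}\log(1-e^{-2\pi s})$, which is consistent with the modulus computation, while the imaginary part gives
\[
-\arg\Gamma^{\ast}(is) = \int_0^\infty \frac{1}{e^{2\pi t}-1}\log\left|\frac{s+t}{s-t}\right|\,dt.
\]
The integrand is strictly positive since $|s+t|/|s-t|>1$ for every $t>0$ with $t\neq s$, so $\arg\Gamma^{\ast}(is)<0$, which immediately yields $-\Im\Gamma^{\ast}(is)>0$.

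The harder half is to establish $\Re\Gamma^{\ast}(is)\geq 0$, which amounts to showing the above integral does not exceed $\pi/2$. Substituting $\tau=t/s$, the quantity $G(s):=-\arg\Gamma^{\ast}(is)$ becomes $G(s) = s\int_0^\infty \log|(1+\tau)/(1-\tau)|/(e^{2\pi s\tau}-1)\,d\tau$. For fixed $\tau>0$, the function $s\mapsto s/(e^{2\pi s\tau}-1)$ has derivative with numerator $e^\sigma(1-\sigma)-1$ where $\sigma=2\pi s\tau$, and $\tfrac{d}{d\sigma}[(1-\sigma)e^\sigma] = -\sigma e^\sigma\leq 0$ shows this numerator is non-positive; hence $G$ is non-increasing in $s$, so $G(s) \leq \lim_{s\to 0^+}G(s) = (2\pi)^{-1}\int_0^\infty \log|(1+\tau)/(1-\tau)|/\tau\,d\tau = \pi/4$, where the last integral evaluates via the classical identity $\sum_{k\geq 0}(2k+1)^{-2}=\pi^2/8$ after splitting at $\tau=1$ and applying the involution $\tau\mapsto 1/\tau$ on $(1,\infty)$. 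Since $\pi/4<\pi/2$, we conclude $\Re\Gamma^{\ast}(is)>0$. The main obstacle is the careful branch tracking in the second step, together with the monotonicity-and-limit argument in the third; both are elementary once set up correctly.
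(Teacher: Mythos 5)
Your proof is correct, but it takes a genuinely different route from the paper. The paper starts from Stieltjes's representation $\Gamma^\ast(z)=\exp\bigl(\int_0^{+\infty}Q(t)(z+t)^{-2}\,dt\bigr)$ with $0\le Q(t)\le\frac{1}{8}$, puts $z=is$, and shows that $-\arg\Gamma^\ast(is)=\int_0^{+\infty}\frac{2st}{(s^2+t^2)^2}Q(t)\,dt$ lies in $[0,\frac{\pi}{4}+\frac{1}{16}]\subset[0,\frac{\pi}{2})$ by evaluating the piece over $[0,1]$ in closed form and crudely bounding the tail with $Q\le\frac{1}{8}$. You instead start from Binet's second formula, continue it to the positive imaginary axis, and arrive at the cleaner representation $-\arg\Gamma^\ast(is)=\int_0^{+\infty}(e^{2\pi t}-1)^{-1}\log\left|\frac{s+t}{s-t}\right|dt$, whose positivity is immediate and whose supremum you identify exactly as $\frac{\pi}{4}$ via monotonicity in $s$ and the limit $s\to0^{+}$. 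Both arguments hinge on the same reduction---showing $0<-\arg\Gamma^\ast(is)<\frac{\pi}{2}$---but yours yields the sharp constant $\frac{\pi}{4}$ (hence the slightly stronger conclusion $\Re\Gamma^\ast(is)>|\Gamma^\ast(is)|/\sqrt{2}$) and recovers $|\Gamma^\ast(is)|^{2}=(1-e^{-2\pi s})^{-1}$ along the way; the price is the branch bookkeeping and two limit interchanges that a full write-up should state explicitly, namely the boundary passage $z\to is$ in Binet's formula (where the integrand acquires an integrable logarithmic singularity at $t=s$, so dominated convergence applies) and the monotone-convergence step at $s\to0^{+}$. The paper's choice of Stieltjes's formula is the more economical one in context, since that same representation is reused in the proof of Lemma \ref{lemma2}.
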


\begin{proof} The proof is based on the following representation of $\Gamma^\ast  \left( z \right)$ due to Stieltjes:
\begin{equation}\label{eq13}
\Gamma^\ast  \left( z \right) = \exp \left( {\int_0^{ + \infty } {\frac{{Q\left( t \right)}}{{\left( {z + t} \right)^2 }}dt} } \right) \; \text{ for } \; \left|\arg z\right| < \pi,
\end{equation}
where $Q\left( t \right) = \frac{1}{2}\left( {t - \left\lfloor t \right\rfloor  - \left( {t - \left\lfloor t \right\rfloor } \right)^2 } \right)$ (see, e.g., Remmert and Kay \cite[pp. 56--58]{Remmert}). We shall use the fact that $0 \le Q\left( t \right) \le \frac{1}{8}$. Substituting $z=is$ with $s>0$ gives
\[
\Gamma^\ast \left( {is} \right) = \exp \left( { - \int_0^{ + \infty } {\frac{{s^2  - t^2 }}{{\left( {s^2  + t^2 } \right)^2 }}Q\left( t \right)dt}  - i\int_0^{ + \infty } {\frac{{2st}}{{\left( {s^2  + t^2 } \right)^2 }}Q\left( t \right)dt} } \right),
\]
whence
\[
\Re \Gamma^\ast  \left( {is} \right)  = \exp \left( { - \int_0^{ + \infty } {\frac{{s^2  - t^2 }}{{\left( {s^2  + t^2 } \right)^2 }}Q\left( t \right)dt} } \right)\cos \left( { \int_0^{ + \infty } {\frac{{2st}}{{\left( {s^2  + t^2 } \right)^2 }}Q\left( t \right)dt} } \right),
\]
and
\[
-\Im \Gamma^\ast  \left( {is} \right) = \exp \left( { - \int_0^{ + \infty } {\frac{{s^2  - t^2 }}{{\left( {s^2  + t^2 } \right)^2 }}Q\left( t \right)dt} } \right)\sin \left( { \int_0^{ + \infty } {\frac{{2st}}{{\left( {s^2  + t^2 } \right)^2 }}Q\left( t \right)dt} } \right).
\]
To prove the lemma, it is enough to show that the integral under the trigonometric functions is non-negative and is at most $\frac{\pi}{2}$ for any $s>0$. As $Q\left(t\right)$ is non-negative, the integral is non-negative. On the other hand,
\[
\int_0^1 {\frac{{2st}}{{\left( {s^2  + t^2 } \right)^2 }}Q\left( t \right)dt}  = \frac{s}{2}\log \left( {\frac{{s^2 }}{{s^2  + 1}}} \right) + \frac{1}{2}\arctan \left( {\frac{1}{s}} \right) \le \frac{\pi }{4},
\]
and
\[
\int_1^{ + \infty } {\frac{{2st}}{{\left( {s^2  + t^2 } \right)^2 }}Q\left( t \right)dt}  \le \int_1^{ + \infty } {\frac{{2st}}{{\left( {s^2  + t^2 } \right)^2 }}\frac{1}{8}dt}  = \frac{1}{8}\frac{s}{{s^2  + 1}} \le \frac{1}{{16}},
\]
whence
\[
\int_0^{ + \infty } {\frac{{2st}}{{\left( {s^2  + t^2 } \right)^2 }}Q\left( t \right)dt}  \le \frac{\pi }{4} + \frac{1}{{16}} < \frac{\pi }{2},
\]
for any $s>0$.
\end{proof}

The inequalities in \eqref{eq5} follow from the lemma and the representations \eqref{eq7} and \eqref{eq8}. From Theorem \ref{thm1} we prove only the bound \eqref{eq6}, the other results can be proved similarly. First, we note that
\[
0 < \frac{1}{{1 + \left( {s/z} \right)^2 }} < 1
\]
for any $s>0$ and $z>0$. Employing this inequality in \eqref{eq9} leads to
\[
\left( { - 1} \right)^{N + 1} R_{2N - 1} \left( z \right) = \frac{{\Theta _1 \left( {z,N} \right)}}{{\pi z^{2N - 1} }}\int_0^{ + \infty } {s^{2N - 2} e^{ - 2\pi s} \Re \Gamma^\ast  \left( {is} \right)ds}  - \frac{{\Theta _2 \left( {z,N} \right)}}{{\pi z^{2N} }}\int_0^{ + \infty } {s^{2N - 1} e^{ - 2\pi s} \Im \Gamma^\ast  \left( {is} \right)ds} ,
\]
where $\Theta _1 \left( {z,N} \right)$ and $\Theta _2 \left( {z,N} \right)$ are some functions of $z$ and $N$ satisfying $0<\Theta _1 \left( {z,N} \right)<1$ and $0<\Theta _2 \left( {z,N} \right)<1$. Upon inserting the formulas \eqref{eq7} and \eqref{eq8} into this representation we obtain \eqref{eq6}.

As for Theorem \ref{thm2}, we prove the result for $R_{2N - 1} \left( z \right)$, the proofs of the other bounds are similar. From \eqref{eq9} and Lemma \ref{lemma1} it follows that
\begin{equation}\label{eq11}
\left| {R_{2N - 1} \left( z \right)} \right| \le \frac{1}{{\pi \left| z \right|^{2N - 1} }}\int_0^{ + \infty } {\frac{{s^{2N - 2} e^{ - 2\pi s} \Re \Gamma^\ast  \left( {is} \right)}}{{\left| {1 + \left( {s/z} \right)^2 } \right|}}ds}  - \frac{1}{{\pi \left| z \right|^{2N} }}\int_0^{ + \infty } {\frac{{s^{2N - 1} e^{ - 2\pi s} \Im \Gamma^\ast \left( {is} \right)}}{{\left| {1 + \left( {s/z} \right)^2 } \right|}}ds} .
\end{equation}
It is easy to show that for any $r>0$
\[
\frac{1}{\left| {1 + re^{ - 2\theta i} } \right|} \le \begin{cases} \left|\csc\left(2\theta\right)\right| & \text{ if } \; \frac{\pi}{4} < \left|\theta\right| <\frac{\pi}{2} \\ 1 & \text{ if } \; \left|\theta\right| \leq \frac{\pi}{4}. \end{cases}
\]
Applying this inequality and the formulas \eqref{eq7} and \eqref{eq8} in \eqref{eq11}, proves the estimate \eqref{eq10} for the case of $R_{2N - 1} \left( z \right)$.

To prove Theorem \ref{thm3}, we shall use the lemma below.

\begin{lemma}\label{lemma2} For any $s>0$ and $0 <\varphi <\frac{\pi}{2}$, we have
\begin{equation}\label{eq14}
\left| {\Gamma^\ast \left( {\frac{{ise^{i\varphi } }}{{\cos \varphi }}} \right)} \right| \le \frac{1}{{\sqrt {1 - 2e^{ - 2\pi s} \cos \left( {2\pi s\tan \varphi } \right) + e^{ - 4\pi s} } }} \le \frac{1}{{1 - e^{ - 2\pi s} }} .
\end{equation}
\end{lemma}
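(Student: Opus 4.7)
My plan is to use the continuation formula \eqref{eq28} to reduce the bound on $|\Gamma^\ast(z)|$ to a lower bound on $|\Gamma^\ast|$ in the right half-plane. Writing $z = \tfrac{ise^{i\varphi}}{\cos\varphi} = -s\tan\varphi + is$, the first identity of \eqref{eq28} with the upper sign gives
\[
\Gamma^\ast(z) = \frac{1}{1 - e^{2\pi iz}}\cdot\frac{1}{\Gamma^\ast(-z)},
\]
in which $-z = s\tan\varphi - is$ satisfies $\Re(-z) > 0$. A direct calculation using $2\pi iz = -2\pi s - 2\pi is\tan\varphi$ gives $|1 - e^{2\pi iz}|^{2} = 1 - 2e^{-2\pi s}\cos(2\pi s\tan\varphi) + e^{-4\pi s}$, so the first inequality in \eqref{eq14} is equivalent to the single claim $|\Gamma^\ast(w)| \ge 1$ for every $w$ with $\Re w > 0$.

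To prove the latter, I would invoke Binet's second integral formula
\[
\log\Gamma^\ast(w) = 2\int_{0}^{\infty}\frac{\arctan(t/w)}{e^{2\pi t}-1}\,dt, \qquad \Re w > 0,
\]
a standard consequence of the Stieltjes representation \eqref{eq13} already used in Lemma \ref{lemma1} (recorded for instance in Remmert and Kay). Writing $w = a + ib$ with $a > 0$ and using $\arctan\zeta = \tfrac{1}{2i}\log\tfrac{1+i\zeta}{1-i\zeta}$, a short computation produces
\[
\Re\arctan(t/w) = \tfrac{1}{2}\left[\arctan\tfrac{b+t}{a}-\arctan\tfrac{b-t}{a}\right] \ge 0 \quad \text{for } t > 0,
\]
because $\arctan$ is monotone increasing and $b+t > b-t$. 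Taking real parts in Binet's identity then yields $\log|\Gamma^\ast(w)| \ge 0$, which is the required bound.

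The second inequality in \eqref{eq14} is immediate: replacing $\cos(2\pi s\tan\varphi)$ by its maximum $1$ inside the square root produces $(1 - e^{-2\pi s})^{2}$, so the square root is at least $1 - e^{-2\pi s}$ and its reciprocal at most $1/(1 - e^{-2\pi s})$. The only step of any substance is the lower bound $|\Gamma^\ast(w)| \ge 1$ on $\Re w > 0$; once Binet's formula is granted, this reduces to the transparent monotonicity of $\arctan$. A self-contained alternative would be to argue by a Phragm\'en--Lindel\"of argument on the right half-plane using the explicit value $|\Gamma^\ast(ib)| = 1/\sqrt{1 - e^{-2\pi|b|}}$ on the imaginary axis (obtainable from the reflection formula) together with $\Gamma^\ast(w)\to 1$ at infinity, but this seems more circuitous than the Binet approach.
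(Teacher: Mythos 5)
Your argument is correct, and while the first step coincides with the paper's, the key step is handled by a genuinely different method. Like the paper, you begin by applying the continuation formula \eqref{eq28} to move the evaluation point $\tfrac{ise^{i\varphi}}{\cos\varphi}=-s\tan\varphi+is$ into the right half-plane, which produces the factor $\bigl|1-e^{2\pi iz}\bigr|^{-1}=\bigl(1-2e^{-2\pi s}\cos(2\pi s\tan\varphi)+e^{-4\pi s}\bigr)^{-1/2}$ and reduces everything to the single inequality $\left|\Gamma^\ast(w)\right|\ge 1$ for $\Re w>0$; the paper does exactly this, phrased via $\Gamma^\ast\bigl(\tfrac{ise^{-i\varphi}}{\cos\varphi}\bigr)$, which is the conjugate point. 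Where you diverge is in proving that inequality: the paper establishes $\left|1/\Gamma^\ast(w)\right|\le 1$ by the Phragm\'en--Lindel\"of principle, which forces it to first verify boundedness of $1/\Gamma^\ast$ throughout the right half-plane (a two-part argument, near and away from the origin, using Stieltjes's formula and the entirety of $1/\Gamma(z+1)$) and then to compute the boundary values on the imaginary axis. You instead invoke Binet's second formula $\log\Gamma^\ast(w)=2\int_0^\infty\frac{\arctan(t/w)}{e^{2\pi t}-1}\,dt$ and observe that $\Re\arctan(t/w)=\tfrac12\bigl[\arctan\tfrac{b+t}{a}-\arctan\tfrac{b-t}{a}\bigr]\ge 0$ by monotonicity of $\arctan$ (the branch is unambiguous since $\tfrac{w+it}{w-it}$ has positive imaginary part), giving $\log\left|\Gamma^\ast(w)\right|\ge 0$ directly. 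This is shorter and entirely elementary, dispensing with the maximum-principle machinery; the trade-off is that it imports an additional classical representation. Your aside that Binet's formula is ``a standard consequence'' of the Stieltjes representation \eqref{eq13} is a slight overstatement---it is an independent classical formula (both are in Remmert and Kay), and deriving one from the other takes some work---but this is a citation matter, not a gap. The treatment of the second inequality in \eqref{eq14} via $\cos(2\pi s\tan\varphi)\le 1$ matches the paper.
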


\begin{proof}
An application of the reflection formula \eqref{eq28} for the gamma function and the relation $\overline {\Gamma^\ast \left( z \right)}  = \Gamma^\ast \left( {\bar z} \right)$ shows that
\begin{align*}
\log \left| {\Gamma^\ast \left( {\frac{{ise^{i\varphi } }}{{\cos \varphi }}} \right)} \right| & =  - \frac{1}{2}\log \left( {1 - 2e^{ - 2\pi s} \cos \left( {2\pi s\tan \varphi } \right) + e^{ - 4\pi s} } \right) - \log \left| {\Gamma^\ast \left( { - \frac{{ise^{i\varphi } }}{{\cos \varphi }}} \right)} \right| \\
& =  - \frac{1}{2}\log \left( {1 - 2e^{ - 2\pi s} \cos \left( {2\pi s\tan \varphi } \right) + e^{ - 4\pi s} } \right) - \log \left| {\Gamma^\ast \left( {\frac{{ise^{ - i\varphi } }}{{\cos \varphi }}} \right)} \right| .
\end{align*}
From this, we infer that
\begin{equation}\label{eq12}
\left| {\Gamma^\ast \left( {\frac{{ise^{i\varphi } }}{{\cos \varphi }}} \right)} \right| = \frac{1}{{\sqrt {1 - 2e^{ - 2\pi s} \cos \left( {2\pi s\tan \varphi } \right) + e^{ - 4\pi s} } }}\frac{1}{{\left| {\Gamma^\ast \left( {\frac{{ise^{ - i\varphi } }}{{\cos \varphi }}} \right)} \right|}} \le \frac{1}{{1 - e^{ - 2\pi s} }}\frac{1}{{\left| {\Gamma^\ast \left( {\frac{{ise^{ - i\varphi } }}{{\cos \varphi }}} \right)} \right|}} .
\end{equation}
Let $z=x+iy$ such that $\left|\arg z \right| < \frac{\pi}{2}$. We show that $\left| {\frac{1}{{\Gamma^\ast \left( z \right)}}} \right|$ is bounded in the right-half plane. Indeed, if $z$ is not too close to the origin, then by Stieltjes's formula \eqref{eq13}
\begin{align*}
\left| {\frac{1}{{\Gamma^\ast \left( z \right)}}} \right| & \le \exp \left( {\frac{1}{8}\int_0^{ + \infty } {\frac{{dt}}{{\left| {z + t} \right|^2 }}} } \right) \le \exp \left( {\frac{1}{{8\cos ^2 \left( {\frac{\theta }{2}} \right)}}\int_0^{ + \infty } {\frac{{dt}}{{\left( {\left| z \right| + t} \right)^2 }}} } \right) \\ & = \exp \left( {\frac{1}{{8\left| z \right|\cos ^2 \left( {\frac{\theta }{2}} \right)}}} \right) \le \exp \left( {\frac{1}{{4\left| z \right|}}} \right) \; \text{ with } \; \theta = \arg z.
\end{align*}
To see the boundedness near the origin, we note that
\begin{align*}
\left| {\frac{1}{{\Gamma^\ast \left( z \right)}}} \right| = \left| {\frac{{z^{z + \frac{1}{2}} \sqrt {2\pi } }}{{e^z }}} \right|\left| {\frac{1}{{\Gamma \left( {z + 1} \right)}}} \right|&  = \sqrt {2\pi } e^{ - \frac{\pi }{2}\left| y \right| + y\arctan \left( {\frac{x}{y}} \right) - x} \left| z \right|^{x + \frac{1}{2}} \left| {\frac{1}{{\Gamma \left( {z + 1} \right)}}} \right| \\
& \le \sqrt {2\pi } e^{ - x} \left| z \right|^{x + \frac{1}{2}} \left| {\frac{1}{{\Gamma \left( {z + 1} \right)}}} \right|,
\end{align*}
and that the reciprocal gamma function is an entire function. Since $\frac{1}{{\Gamma^\ast \left( z \right)}}$ is holomorphic when $\left|\arg z \right| < \frac{\pi}{2}$, continuous on its boundary and
\[
\left| {\frac{1}{\Gamma^\ast  \left( { \pm iy} \right)}} \right| = \sqrt {1 - e^{ - 2\pi y} }  \le 1,
\]
by the Phragm\'en--Lindel\"of principle (\cite[p. 177]{Titchmarsh})
\[
\left| {\frac{1}{\Gamma^\ast  \left( z \right)}} \right| \le 1
\]
holds for any $z$ in the sector $\left|\arg z \right| \leq \frac{\pi}{2}$. Employing this inequality with $z = \frac{{ise^{ - i\varphi } }}{{\cos \varphi }}$ in \eqref{eq12} gives \eqref{eq14}.
\end{proof}

We prove the claimed bound only for $R_N \left( z \right)$, the proof for $\widetilde R_N \left( z \right)$ is completely analogous. Since $R_N \left( {\bar z} \right) = \overline {R_N \left( z \right)}$, we can assume that $0 \le \theta = \arg z \le \frac{\pi}{2}$. The idea is to rotate the path of integration through an angle $0 < \varphi <\frac{\pi}{2}$ in the first integral in \eqref{eq15}, to find
\[
R_N \left( z \right) = \frac{1}{{2\pi i}}\frac{{i^N }}{{z^N }}\int_0^{ + \infty e^{i\varphi } } {\frac{{s^{N - 1} e^{ - 2\pi s} \Gamma^\ast  \left( {is} \right)}}{{1 - is/z}}ds}  - \frac{1}{{2\pi i}}\frac{{\left( { - i} \right)^N }}{{z^N }}\int_0^{ + \infty } {\frac{{s^{N - 1} e^{ - 2\pi s} \Gamma^\ast  \left( { - is} \right)}}{{1 + is/z}}ds} .
\]
By analytic continuation, this expression is certainly valid when $0 \le \theta \le \frac{\pi}{2}$. Substituting $s = \frac{te^{i\varphi }}{\cos \varphi}$ in the first integral and using the inequalities
\[
\left| {\frac{1}{{1 + is/z}}} \right| \le 1\; \text{  and  } \; \left| {\frac{1}{{1 - ite^{i\varphi } /\cos \varphi z}}} \right| \le \sec \left( {\theta  - \varphi } \right),
\]
we find
\begin{align*}
\left|R_N \left( z \right)\right| \leq \; & \frac{{\sec \left( {\theta  - \varphi } \right)}}{{\cos ^N \varphi }}\frac{1}{{\left| z \right|^N }}\frac{1}{{2\pi }}\int_0^{ + \infty } {t^{N - 1} e^{ - 2\pi t} \left| {\Gamma^\ast  \left( {\frac{{ite^{i\varphi } }}{{\cos \varphi }}} \right)} \right|dt} \\ & + \frac{1}{{\left| z \right|^N }}\frac{1}{{2\pi }}\int_0^{ + \infty } {s^{N - 1} e^{ - 2\pi s} \left| {\Gamma^\ast  \left( { - is} \right)} \right|ds} .
\end{align*}
The value $\varphi = \arctan\left(N^{-1/2}\right)$ minimises the function $\sec \left( {\frac{\pi }{2} - \varphi } \right)\cos ^{ - N} \varphi$, and
\[
\frac{{\sec \left( {\theta  - \arctan \left( {N^{ - 1/2} } \right)} \right)}}{{\cos ^N \left( {\arctan \left( {N^{ - 1/2} } \right)} \right)}} \le \frac{{\sec \left( {\frac{\pi }{2} - \arctan \left( {N^{ - 1/2} } \right)} \right)}}{{\cos ^N \left( {\arctan \left( {N^{ - 1/2} } \right)} \right)}} = \left( {1 + \frac{1}{N}} \right)^{\frac{{N + 1}}{2}} \sqrt N,
\]
for any $0 \le \theta \le \frac{\pi}{2}$ with $N\geq 1$. Boyd \cite[equation (3.9)]{Boyd2} showed that
\[
\frac{1}{{2\pi }}\int_0^{ + \infty } {s^{N - 1} e^{ - 2\pi s} \left| {\Gamma^\ast  \left( { - is} \right)} \right|ds}  \le \frac{{\left( {1 + \zeta \left( N \right)} \right)\Gamma \left( N \right)}}{{\left( {2\pi } \right)^{N + 1} }}\frac{1}{2} \; \text{ for } \; N\geq 2.
\]
From Lemma \ref{lemma2}, we obtain
\[
\frac{1}{{2\pi }}\int_0^{ + \infty } {t^{N - 1} e^{ - 2\pi t} \left| {\Gamma^\ast  \left( {\frac{{ite^{i\varphi } }}{{\cos \varphi }}} \right)} \right|dt}  \le \frac{1}{{2\pi }}\int_0^{ + \infty } {\frac{{t^{N - 1} e^{ - 2\pi t} }}{{1 - e^{ - 2\pi t} }}dt}  = \frac{{B_N \Gamma \left( N \right)}}{{\left( {2\pi } \right)^{N + 1} }} \frac{1}{2}
\]
for $N\geq 3$ with $B_N  = 2 \zeta \left( N \right)$, using formula 25.5.1 in \cite[p. 604]{NIST}. This estimate also holds when $N=2$, but for this case, we derive a sharper bound using the first inequality in Lemma \ref{lemma2}:
\begin{multline*}
\frac{1}{{2\pi }}\int_0^{ + \infty } {t^{2 - 1} e^{ - 2\pi t} \left| {\Gamma^\ast \left( {\frac{{ite^{i\varphi } }}{{\cos \varphi }}} \right)} \right|dt} \\ \le \frac{1}{{2\pi }}\int_0^{ + \infty } {\frac{{te^{ - 2\pi t} }}{{\sqrt {1 - 2e^{ - 2\pi t} \cos \left( {2\pi t\tan \left( {\arctan \left( {2^{ - 1/2} } \right)} \right)} \right) + e^{ - 4\pi t} } }}dt}  = \frac{{B_2 \Gamma \left( 2 \right)}}{{\left( {2\pi } \right)^{2 + 1} }} \frac{1}{2}
\end{multline*}
with $B_2  = 2.81944984\ldots <2.82$. Therefore
\[
\left| {R_N \left( z \right)} \right| \le \frac{1}{2}\left( {\frac{{B_N }}{{1 + \zeta \left( N \right)}}\left( {1 + \frac{1}{N}} \right)^{\frac{{N + 1}}{2}} \sqrt N  + 1} \right)\frac{{\left( {1 + \zeta \left( N \right)} \right)\Gamma \left( N \right)}}{{\left( {2\pi } \right)^{N + 1} \left| z \right|^N }} .
\]
To complete the proof, we note that
\[
{\frac{{B_N }}{{1 + \zeta \left( N \right)}}\left( {1 + \frac{1}{N}} \right)^{\frac{{N + 1}}{2}} }  < 2
\]
for any $N\geq 2$.

\section{Exponentially improved asymptotic expansions}\label{section3}

We shall find it convenient to express our exponentially improved expansions in terms of the (scaled) Terminant function, which is defined by
\[
\widehat T_p \left( w \right) = \frac{{e^{\pi ip} w^{1 - p} e^{ - w} }}{{2\pi i}}\int_0^{ + \infty } {\frac{{t^{p - 1} e^{ - t} }}{w + t}dt} \; \text{ for } \; p>0 \; \text{ and } \; \left| \arg w \right| < \pi ,
\]
and by analytic continuation elsewhere. Olver \cite[equations (4.5) and (4.6)]{Olver4} showed that when $p \sim \left|w\right|$ and $w \to \infty$, we have
\begin{equation}\label{eq25}
ie^{ - \pi ip} \widehat T_p \left( w \right) = \begin{cases} \mathcal{O}\left( {e^{ - w - \left| w \right|} } \right) & \; \text{ if } \; \left| {\arg w} \right| \le \pi \\ \mathcal{O}\left(1\right) & \; \text{ if } \; - 3\pi  < \arg w \le  - \pi. \end{cases}
\end{equation}
Concerning the smooth transition of the Stokes discontinuities, we will use the more precise asymptotics
\begin{equation}\label{eq35}
\widehat T_p \left( w \right) = \frac{1}{2} + \frac{1}{2}\mathop{\text{erf}} \left( {c\left( \varphi  \right)\sqrt {\frac{1}{2}\left| w \right|} } \right) + \mathcal{O}\left( {\frac{{e^{ - \frac{1}{2}\left| w \right|c^2 \left( \varphi  \right)} }}{{\left| w \right|^{\frac{1}{2}} }}} \right)
\end{equation}
for $-\pi +\delta \leq \arg w \leq 3 \pi -\delta$, $0 < \delta  \le 2\pi$; and
\begin{equation}\label{eq26}
e^{ - 2\pi ip} \widehat T_p \left( w \right) =  - \frac{1}{2} + \frac{1}{2}\mathop{\text{erf}} \left( { - \overline {c\left( { - \varphi } \right)} \sqrt {\frac{1}{2}\left| w \right|} } \right) + \mathcal{O}\left( {\frac{{e^{ - \frac{1}{2}\left| w \right|\overline {c^2 \left( { - \varphi } \right)} } }}{{\left| w \right|^{\frac{1}{2}} }}} \right)
\end{equation}
for $- 3\pi  + \delta  \le \arg w \le \pi  - \delta$, $0 < \delta \le 2\pi$. Here $\varphi = \arg w$ and $\mathop{\text{erf}}$ denotes the Error function. The quantity $c\left( \varphi  \right)$ is defined implicitly by the equation
\[
\frac{1}{2}c^2 \left( \varphi  \right) = 1 + i\left( {\varphi  - \pi } \right) - e^{i\left( {\varphi  - \pi } \right)},
\]
and corresponds to the branch of $c\left( \varphi  \right)$ which has the following expansion in the neighbourhood of $\varphi = \pi$:
\begin{equation}\label{eq27}
c\left( \varphi  \right) = \left( {\varphi  - \pi } \right) + \frac{i}{6}\left( {\varphi  - \pi } \right)^2  - \frac{1}{{36}}\left( {\varphi  - \pi } \right)^3  - \frac{i}{{270}}\left( {\varphi  - \pi } \right)^4  +  \cdots .
\end{equation}
For complete asymptotic expansions, see Olver \cite{Olver5}. We remark that Olver uses the different notation $F_p \left( w \right) = ie^{ - \pi ip} \widehat T_p \left( w \right)$ for the Terminant function and the other branch of the function $c\left( \varphi  \right)$. For further properties of the Terminant function, see, for example, Paris and Kaminski \cite[Chapter 6]{Paris3}.

\subsection{Proof of the exponentially improved expansions} We start by proving the expansion \eqref{eq48} and the estimate \eqref{eq29} for the right-half plane. Let $0 \leq M < N$ be integers. First suppose, in addition, that $M \geq 2$. As in the proof of Theorem \ref{thm3}, we rotate the path of integration by an angle $0 < \varphi <\frac{\pi}{2}$ in the first integral of Boyd's resurgence formula \eqref{eq15} to find that for any $s>0$
\[
\Gamma^\ast \left( {is} \right) = \sum\limits_{m = 0}^{M - 1} {\left( { - 1} \right)^m \frac{{\gamma _m }}{{\left( {is} \right)^m }}}  + R_M \left( {is} \right),
\]
with
\begin{gather}\label{eq22}
\begin{split}
R_M \left( {is} \right) & = \frac{1}{{2\pi i}}\frac{1}{{s^M }}\int_0^{ + \infty e^{i\varphi } } {\frac{{t^{M - 1} e^{ - 2\pi t} \Gamma^\ast  \left( {it} \right)}}{{1 - t/s}}dt}  - \frac{1}{{2\pi i}}\frac{1}{{\left( { - s} \right)^M }}\int_0^{ + \infty } {\frac{{t^{M - 1} e^{ - 2\pi t} \Gamma^\ast \left( { - it} \right)}}{{1 + t/s}}dt} \\
& = \frac{1}{{2\pi i}}\frac{1}{{\left( {se^{ - i\varphi } } \right)^M }}\int_0^{ + \infty } {\frac{{t^{M - 1} e^{ - 2\pi te^{i\varphi } } \Gamma^\ast \left( {ite^{i\varphi } } \right)}}{{1 - te^{i\varphi } /s}}dt}  - \frac{1}{{2\pi i}}\frac{1}{{\left( { - s} \right)^M }}\int_0^{ + \infty } {\frac{{t^{M - 1} e^{ - 2\pi t} \Gamma^\ast \left( { - it} \right)}}{{1 + t/s}}dt} .
\end{split}
\end{gather}
A similar formula for $\Gamma^\ast \left( {-is} \right)$ can be obtained in the same way. First, we suppose that $\left|\arg z\right| < \frac{\pi}{2}$. Substitution into \eqref{eq15} yields
\begin{gather}\label{eq20}
\begin{split}
R_N \left( z \right) = \; & \sum\limits_{m = 0}^{M - 1} {\left( { - 1} \right)^m \frac{{\gamma _m }}{{z^m }}\frac{{i^{N - m} z^{m - N} }}{{2\pi i}}\int_0^{ + \infty } {\frac{{s^{N - m - 1} e^{ - 2\pi s} }}{{1 - is/z}}ds} } \\ & - \sum\limits_{m = 0}^{M - 1} {\left( { - 1} \right)^m \frac{{\gamma _m }}{{z^m }}\frac{{\left( { - i} \right)^{N - m} z^{m - N} }}{{2\pi i}}\int_0^{ + \infty } {\frac{{s^{N - m - 1} e^{ - 2\pi s} }}{{1 + is/z}}ds} }  + R_{N,M} \left( z \right)
\end{split}
\end{gather}
with
\begin{equation}\label{eq21}
R_{N,M} \left( z \right) = \frac{1}{{2\pi i}}\frac{{i^N }}{{z^N }}\int_0^{ + \infty } {\frac{{s^{N - 1} e^{ - 2\pi s} R_M \left( {is} \right)}}{{1 - is/z}}ds}  - \frac{1}{{2\pi i}}\frac{{\left( { - i} \right)^N }}{{z^N }}\int_0^{ + \infty } {\frac{{s^{N - 1} e^{ - 2\pi s} R_M \left( { - is} \right)}}{{1 + is/z}}ds} .
\end{equation}
The integrals in \eqref{eq20} can be identified in terms of the Terminant function since
\[
\frac{{\left( { \pm i} \right)^{N - m} z^{m - N} }}{{2\pi i}}\int_0^{ + \infty } {\frac{{s^{N - m - 1} e^{ - 2\pi s} }}{{1 \mp is/z}}ds}  = e^{ \pm 2\pi iz} \widehat T_{N - m} \left( { \pm 2\pi iz} \right).
\]
Therefore, we have the following expansion
\begin{equation}\label{eq49}
R_N \left( z \right) = e^{2\pi iz} \sum\limits_{m = 0}^{M - 1} {\left( { - 1} \right)^m \frac{{\gamma _m }}{{z^m }}\widehat T_{N - m} \left( {2\pi iz} \right)}  - e^{ - 2\pi iz} \sum\limits_{m = 0}^{M - 1} {\left( { - 1} \right)^m \frac{{\gamma _m }}{{z^m }}\widehat T_{N - m} \left( { - 2\pi iz} \right)}  + R_{N,M} \left( z \right).
\end{equation}
Taking $z = re^{i\theta }$, the representation \eqref{eq21} becomes
\begin{equation}\label{eq23}
R_{N,M} \left( z \right) = \frac{1}{{2\pi i}}\frac{1}{{\left( { - ie^{i\theta } } \right)^N }}\int_0^{ + \infty } {\frac{{\tau ^{N - 1} e^{ - 2\pi r\tau } R_M \left( {ir\tau } \right)}}{{1 - i\tau e^{ - i\theta } }}d\tau }  - \frac{1}{{2\pi i}}\frac{1}{{\left( {ie^{i\theta } } \right)^N }}\int_0^{ + \infty } {\frac{{\tau ^{N - 1} e^{ - 2\pi r\tau } R_M \left( { - ir\tau } \right)}}{{1 + i\tau e^{ - i\theta } }}d\tau } .
\end{equation}
We consider the first integral. Using the integral formula \eqref{eq22}, $R_M \left( {ir\tau } \right)$ can be written in the form
\begin{align*}
R_M \left( {ir\tau } \right) = \; & \frac{1}{{2\pi i}}\frac{1}{{\left( {r\tau e^{ - i\varphi } } \right)^M }}\int_0^{ + \infty } {\frac{{t^{M - 1} e^{ - 2\pi te^{i\varphi } } \Gamma^\ast \left( {ite^{i\varphi } } \right)}}{{1 - te^{i\varphi } /r\tau }}dt}  - \frac{1}{{2\pi i}}\frac{1}{{\left( { - r\tau } \right)^M }}\int_0^{ + \infty } {\frac{{t^{M - 1} e^{ - 2\pi t} \Gamma^\ast \left( { - it} \right)}}{{1 + t/r\tau }}dt} \\
 = \; & \frac{1}{{2\pi i}}\frac{1}{{\left( {r\tau e^{ - i\varphi } } \right)^M }}\left( {\int_0^{ + \infty } {\frac{{t^{M - 1} e^{ - 2\pi te^{i\varphi } } \Gamma^\ast \left( {ite^{i\varphi } } \right)}}{{1 - te^{i\varphi } /r}}dt}  + \left( {\tau  - 1} \right)\int_0^{ + \infty } {\frac{{t^{M - 1} e^{ - 2\pi te^{i\varphi } } \Gamma^\ast \left( {ite^{i\varphi } } \right)}}{{\left( {1 - r\tau /te^{i\varphi } } \right)\left( {1 - te^{i\varphi } /r} \right)}}dt} } \right) \\
& - \frac{1}{{2\pi i}}\frac{1}{{\left( { - r\tau } \right)^M }}\left( {\int_0^{ + \infty } {\frac{{t^{M - 1} e^{ - 2\pi t} \Gamma^\ast \left( { - it} \right)}}{{1 + t/r}}dt}  + \left( {\tau  - 1} \right)\int_0^{ + \infty } {\frac{{t^{M - 1} e^{ - 2\pi t} \Gamma^\ast \left( { - it} \right)}}{{\left( {1 + r\tau /t} \right)\left( {1 + t/r} \right)}}dt} } \right) .
\end{align*}
Therefore, the first integral in \eqref{eq23} can be estimated as follows
\begin{align*}
& \left| {\frac{1}{{2\pi i}}\frac{1}{{\left( { - ie^{i\theta } } \right)^N }}\int_0^{ + \infty } {\frac{{\tau ^{N - 1} e^{ - 2\pi r\tau } R_M \left( {ir\tau } \right)}}{{1 - i\tau e^{ - i\theta } }}d\tau } } \right| \\  & \le \frac{1}{{2\pi r^M }}\left| {\int_0^{ + \infty } {\frac{{t^{M - 1} e^{ - 2\pi te^{i\varphi } } \Gamma^\ast \left( {ite^{i\varphi } } \right)}}{{1 - te^{i\varphi } /r}}dt} } \right|\left| {\frac{1}{{2\pi }}\int_0^{ + \infty } {\frac{{\tau ^{N - M - 1} e^{ - 2\pi r\tau } }}{{1 - i\tau e^{ - i\theta } }}d\tau } } \right|\\
& + \frac{1}{{2\pi r^M }}\int_0^{ + \infty } {\tau ^{N - M - 1} e^{ - 2\pi r\tau } \left| {\frac{{\tau  - 1}}{{\tau  + ie^{i\theta } }}} \right|\left| {\frac{1}{{2\pi }}\int_0^{ + \infty } {\frac{{t^{M - 1} e^{ - 2\pi te^{i\varphi } } \Gamma^\ast \left( {ite^{i\varphi } } \right)}}{{\left( {1 - r\tau /te^{i\varphi } } \right)\left( {1 - te^{i\varphi } /r} \right)}}dt} } \right|d\tau } \\
& + \frac{1}{{2\pi r^M }}\left| {\int_0^{ + \infty } {\frac{{t^{M - 1} e^{ - 2\pi t} \Gamma^\ast \left( { - it} \right)}}{{1 + t/r}}dt} } \right|\left| {\frac{1}{{2\pi }}\int_0^{ + \infty } {\frac{{\tau ^{N - M - 1} e^{ - 2\pi r\tau } }}{{1 - i\tau e^{ - i\theta } }}d\tau } } \right|\\
& + \frac{1}{{2\pi r^M }}\int_0^{ + \infty } {\tau ^{N - M - 1} e^{ - 2\pi r\tau } \left| {\frac{{\tau  - 1}}{{\tau  + ie^{i\theta } }}} \right|\left| {\frac{1}{{2\pi }}\int_0^{ + \infty } {\frac{{t^{M - 1} e^{ - 2\pi t} \Gamma^\ast \left( { - it} \right)}}{{\left( {1 + r\tau /t} \right)\left( {1 + t/r} \right)}}dt} } \right|d\tau } .
\end{align*}
Noting that
\[
\left| {\frac{{\tau  - 1}}{{\tau  + ie^{i\theta } }}} \right| \le 1,\; \left| {\frac{1}{{1 + t/r}}} \right| \le 1,\; \left| {\frac{1}{{\left( {1 + r\tau /t} \right)\left( {1 + t/r} \right)}}} \right| \le 1
\]
and
\[
\left| {\frac{1}{{1 - te^{i\varphi } /r}}} \right| \le \csc \varphi ,\; \left| {\frac{1}{{\left( {1 - r\tau /te^{i\varphi } } \right)\left( {1 - te^{i\varphi } /r} \right)}}} \right| \le \csc ^2 \varphi 
\]
for any positive $r$, $t$ and $\tau$, we deduce the upper bound
\begin{align*}
& \left| {\frac{1}{{2\pi i}}\frac{1}{{\left( { - ie^{i\theta } } \right)^N }}\int_0^{ + \infty } {\frac{{\tau ^{N - 1} e^{ - 2\pi r\tau } R_M \left( {ir\tau } \right)}}{{1 - i\tau e^{ - i\theta } }}d\tau } } \right| \\ 
& \le \frac{{\csc \varphi }}{{2\pi r^M }}\int_0^{ + \infty } {t^{M - 1} \left| {e^{ - 2\pi te^{i\varphi } } \Gamma^\ast \left( {ite^{i\varphi } } \right)} \right|dt} \left| {\frac{1}{{2\pi }}\int_0^{ + \infty } {\frac{{\tau ^{N - M - 1} e^{ - 2\pi r\tau } }}{{1 - i\tau e^{ - i\theta } }}d\tau } } \right|\\
& + \frac{{\csc ^2 \varphi }}{{2\pi }}\int_0^{ + \infty } {t^{M - 1}\left| {e^{ - 2\pi te^{i\varphi } } \Gamma^\ast \left( {ite^{i\varphi } } \right)} \right|dt} \frac{1}{{2\pi r^M }}\int_0^{ + \infty } {\tau ^{N - M - 1} e^{ - 2\pi r\tau } d\tau } \\
& + \frac{1}{{2\pi r^M }}\int_0^{ + \infty } {t^{M - 1} e^{ - 2\pi t} \left| {\Gamma^\ast \left( { - it} \right)} \right|dt} \left| {\frac{1}{{2\pi }}\int_0^{ + \infty } {\frac{{\tau ^{N - M - 1} e^{ - 2\pi r\tau } }}{{1 - i\tau e^{ - i\theta } }}d\tau } } \right|\\
& + \frac{1}{{2\pi }}\int_0^{ + \infty } {t^{M - 1} e^{ - 2\pi t} \left| {\Gamma^\ast \left( { - it} \right)} \right|dt} \frac{1}{{2\pi r^M }}\int_0^{ + \infty } {\tau ^{N - M - 1} e^{ - 2\pi r\tau } d\tau }.
\end{align*}
A straightforward computation shows that this upper bound simplifies to
\begin{align*}
& \left| {\frac{1}{{2\pi i}}\frac{1}{{\left( { - ie^{i\theta } } \right)^N }}\int_0^{ + \infty } {\frac{{\tau ^{N - 1} e^{ - 2\pi r\tau } R_M \left( {ir\tau } \right)}}{{1 - i\tau e^{ - i\theta } }}d\tau } } \right| \\ 
& \le \left( {\frac{{\csc \varphi }}{{\cos ^M \varphi }}\frac{1}{{2\pi }}\int_0^{ + \infty } {t^{M - 1} e^{ - 2\pi t} \left| {\Gamma^\ast \left( {\frac{{ite^{i\varphi } }}{{\cos \varphi }}} \right)} \right|dt}  + \frac{1}{{2\pi }}\int_0^{ + \infty } {t^{M - 1} e^{ - 2\pi t} \left| {\Gamma^\ast \left( { - it} \right)} \right|dt} } \right)\frac{{\left| {e^{2\pi iz} \widehat T_{N - M} \left( {2\pi iz} \right)} \right|}}{{\left| z \right|^M }} \\
& + \left( {\frac{{\csc ^2 \varphi }}{{\cos ^M \varphi }}\frac{1}{{2\pi }}\int_0^{ + \infty } {t^{M - 1} e^{ - 2\pi t} \left| {\Gamma^\ast \left( {\frac{{ite^{i\varphi } }}{{\cos \varphi }}} \right)} \right|dt}  + \frac{1}{{2\pi }}\int_0^{ + \infty } {t^{M - 1} e^{ - 2\pi t} \left| {\Gamma^\ast \left( { - it} \right)} \right|dt} } \right)\frac{{\Gamma \left( {N - M} \right)}}{{\left( {2\pi } \right)^{N - M + 1} \left| z \right|^N }} .
\end{align*}
With the choice $\varphi = \arctan\left(M^{-\frac{1}{2}}\right)$, we have
\[
\frac{{\csc \varphi }}{{\cos ^M \varphi }} = \left( {\frac{{M + 1}}{M}} \right)^{\frac{{M + 1}}{2}} \sqrt M  < 2\sqrt M \; \text{ and } \; \frac{{\csc ^2 \varphi }}{{\cos ^M \varphi }} = \left( {\frac{{M + 1}}{M}} \right)^{\frac{M}{2} + 1} M < 3M .
\]
By Lemma \ref{lemma2}, we obtain the estimate
\[
\frac{1}{{2\pi }}\int_0^{ + \infty } {t^{M - 1} e^{ - 2\pi t} \left| {\Gamma^\ast \left( {\frac{{ite^{i\varphi } }}{{\cos \varphi }}} \right)} \right|dt}  \le \frac{1}{{2\pi }}\int_0^{ + \infty } {\frac{{t^{M - 1} e^{ - 2\pi t} }}{{1 - e^{ - 2\pi t} }}dt}  = \frac{{\zeta \left( M \right)\Gamma \left( M \right)}}{{\left( {2\pi } \right)^{M + 1} }} .
\]
The other type of integral can be bounded by the same quantity since
\[
\frac{1}{{2\pi }}\int_0^{ + \infty } {t^{M - 1} e^{ - 2\pi t} \left| {\Gamma^\ast  \left( { - it} \right)} \right|dt}  = \frac{1}{{2\pi }}\int_0^{ + \infty } {\frac{{t^{M - 1} e^{ - 2\pi t} }}{{\sqrt {1 - e^{ - 2\pi t} } }}dt}  < \frac{{\zeta \left( M \right)\Gamma \left( M \right)}}{{\left( {2\pi } \right)^{M + 1} }}.
\]
Therefore, we find
\begin{multline*}
 \left| {\frac{1}{{2\pi i}}\frac{1}{{\left( { - ie^{i\theta } } \right)^N }}\int_0^{ + \infty } {\frac{{\tau ^{N - 1} e^{ - 2\pi r\tau } R_M \left( {ir\tau } \right)}}{{1 - i\tau e^{ - i\theta } }}d\tau } } \right| \\
 \le \left( {2\sqrt M  + 1} \right)\frac{{\zeta \left( M \right)\Gamma \left( M \right)}}{{\left( {2\pi } \right)^{M + 1} \left| z \right|^M }}\left| {e^{2\pi iz} \widehat T_{N - M} \left( {2\pi iz} \right)} \right| + \left( {3M + 1} \right)\frac{{\zeta \left( M \right)\Gamma \left( M \right)\Gamma \left( {N - M} \right)}}{{\left( {2\pi } \right)^{N + 2} \left| z \right|^N }} .
\end{multline*}
Similarly, we have the following upper bound for the second integral in \eqref{eq23}:
\begin{multline*}
 \left| {\frac{1}{{2\pi i}}\frac{1}{{\left( { ie^{i\theta } } \right)^N }}\int_0^{ + \infty } {\frac{{\tau ^{N - 1} e^{ - 2\pi r\tau } R_M \left( {-ir\tau } \right)}}{{1 + i\tau e^{ - i\theta } }}d\tau } } \right| \\
 \le \left( {2\sqrt M  + 1} \right)\frac{{\zeta \left( M \right)\Gamma \left( M \right)}}{{\left( {2\pi } \right)^{M + 1} \left| z \right|^M }}\left| {e^{-2\pi iz} \widehat T_{N - M} \left( {-2\pi iz} \right)} \right| + \left( {3M + 1} \right)\frac{{\zeta \left( M \right)\Gamma \left( M \right)\Gamma \left( {N - M} \right)}}{{\left( {2\pi } \right)^{N + 2} \left| z \right|^N }}.
\end{multline*}
Thus, we conclude that
\begin{align*}
\left| {R_{N,M} \left( z \right)} \right| \le \; & \left( {2\sqrt M  + 1} \right)\frac{{\zeta \left( M \right)\Gamma \left( M \right)}}{{\left( {2\pi } \right)^{M + 1} \left| z \right|^M }}\left( {\left| {e^{2\pi iz} \widehat T_{N - M} \left( {2\pi iz} \right)} \right| + \left| {e^{ - 2\pi iz} \widehat T_{N - M} \left( { - 2\pi iz} \right)} \right|} \right)\\
& + \left( {6M + 2} \right)\frac{{\zeta \left( M \right)\Gamma \left( M \right)\Gamma \left( {N - M} \right)}}{{\left( {2\pi } \right)^{N + 2} \left| z \right|^N }} .
\end{align*}
By continuity, this bound holds in the closed sector $\left|\arg z\right| \leq \frac{\pi}{2}$. Suppose that $N = 2\pi \left| z \right| + \rho$ where $\rho$ is a bounded quantity. Employing Stirling's formula, we find that
\[
\left( {6M + 2} \right)\frac{{\zeta \left( M \right)\Gamma \left( M \right)\Gamma \left( {N - M} \right)}}{{\left( {2\pi } \right)^{N + 2} \left| z \right|^N }} = \mathcal{O}_{M,\rho } \left( {\frac{{e^{ - 2\pi \left| z \right|} }}{{\left| z \right|^{M + \frac{1}{2}} }}} \right)
\]
as $z \to \infty$. Olver's estimation \eqref{eq25} shows that
\[
\left( {2\sqrt M  + 1} \right)\frac{{\zeta \left( M \right)\Gamma \left( M \right)}}{{\left( {2\pi } \right)^{M + 1} \left| z \right|^M }}\left( {\left| {e^{2\pi iz} \widehat T_{N - M} \left( {2\pi iz} \right)} \right| + \left| {e^{ - 2\pi iz} \widehat T_{N - M} \left( { - 2\pi iz} \right)} \right|} \right) = \mathcal{O}_{M,\rho } \left( {\frac{{e^{ - 2\pi \left| z \right|} }}{{\left| z \right|^M }}} \right)
\]
for large $z$. Therefore, we have that
\begin{equation}\label{eq51}
R_{N,M} \left( z \right)= \mathcal{O}_{M,\rho } \left( {\frac{{e^{ - 2\pi \left| z \right|} }}{{\left| z \right|^M }}} \right)
\end{equation}
as $z \to \infty$ in the sector $\left|\arg z\right| \leq \frac{\pi}{2}$. If $M=0$ or $1$, we define $R_{N,M} \left( z \right)$ by \eqref{eq49}, consequently
\begin{equation}\label{eq52}
R_{N,1} \left( z \right) = \frac{{e^{ - 2\pi iz} \widehat T_{N - 1} \left( { - 2\pi iz} \right) - e^{2\pi iz} \widehat T_{N - 1} \left( {2\pi iz} \right)}}{{12z}} + R_{N,2} \left( z \right),
\end{equation}
\begin{equation}\label{eq53}
R_{N,0} \left( z \right) = e^{2\pi iz} \widehat T_N \left( {2\pi iz} \right) - e^{ - 2\pi iz} \widehat T_N \left( { - 2\pi iz} \right) + R_{N,1} \left( z \right).
\end{equation}
The proof of \eqref{eq29} for the cases $M=0,1$ now follows from these representations, the bound \eqref{eq51} we have established and Olver's estimate \eqref{eq25}.

The proof of the expansion \eqref{eq30} and the estimates \eqref{eq29}, \eqref{eq54} for the remainder $\widetilde R_{N,M} \left( z \right)$ is similar.

Consider now the sector $\frac{\pi}{2} < \arg z < \frac{3\pi}{2}$. Assume again that $M \geq 2$. When $z$ enters the sector $\frac{\pi}{2}< \arg z < \frac{3\pi}{2}$, the pole in the first integral in \eqref{eq21} crosses the integration path. According to the residue theorem, we obtain
\begin{align*}
R_{N,M} \left( z \right) & = e^{2\pi iz} R_M \left( z \right) + \frac{1}{2\pi i}\frac{{i^N }}{{z^N }}\int_0^{ + \infty } {\frac{s^{N - 1} e^{ - 2\pi s} R_M \left( {is} \right)}{1 - is/z}ds}  - \frac{1}{2\pi i}\frac{\left( { - i} \right)^N }{z^N}\int_0^{ + \infty } {\frac{s^{N - 1} e^{ - 2\pi s} R_M \left( { - is} \right)}{1 + is/z}ds} \\
& = e^{2\pi iz} R_M \left( z \right) + \widetilde R_{N,M} \left( {ze^{ - \pi i} } \right)
\end{align*}
when $\frac{\pi}{2} < \arg z < \frac{3\pi}{2}$. To find the analytic continuation of $R_M \left( z \right)$ to this sector, we apply the same argument but for the integral representation \eqref{eq15} to deduce
\begin{align*}
R_M \left( z \right) & = e^{2\pi iz} \Gamma^\ast \left( z \right) + \frac{1}{{2\pi i}}\frac{{i^M }}{{z^M }}\int_0^{ + \infty } {\frac{{s^{M - 1} e^{ - 2\pi s} \Gamma^\ast  \left( {is} \right)}}{{1 - is/z}}ds}  - \frac{1}{{2\pi i}}\frac{{\left( { - i} \right)^M }}{{z^M }}\int_0^{ + \infty } {\frac{{s^{M - 1} e^{ - 2\pi s} \Gamma^\ast  \left( { - is} \right)}}{{1 + is/z}}ds} \\
& = e^{2\pi iz} \Gamma^\ast \left( z \right) + \widetilde R_M \left( {ze^{ - \pi i} } \right) = \frac{1}{e^{-2\pi iz}-1 }\frac{1}{{\Gamma^\ast \left( {ze^{ - \pi i} } \right)}} + \widetilde R_M \left( {ze^{ - \pi i} } \right) .
\end{align*}
Here we have made us of the connection formula \eqref{eq28}. Therefore, the analytic continuation of the expansion \eqref{eq49} to the sector $\frac{\pi}{2} < \arg z < \frac{3\pi}{2}$ can be obtained by setting
\[
R_{N,M} \left( z \right) = \frac{{e^{2\pi iz} }}{e^{-2\pi iz} -1}\frac{1}{{\Gamma^\ast \left( {ze^{ - \pi i} } \right)}} + e^{2\pi iz} \widetilde R_M \left( {ze^{ - \pi i} } \right) + \widetilde R_{N,M} \left( {ze^{ - \pi i} } \right).
\]
In the proof of Lemma \ref{lemma2} we showed that the reciprocal scaled gamma function is bounded in the right-half plane, hence by Theorem \ref{thm3} and the estimate \eqref{eq29} we infer that
\begin{align*}
R_{N,M} \left( z \right) & = \mathcal{O}\left( {\frac{{e^{ - 2\pi \Im \left( z \right)} }}{{\left| {1 - e^{ - 2\pi iz} } \right|}}} \right) + \mathcal{O}_M \left( {\frac{{e^{ - 2\pi \Im \left( z \right)} }}{{\left| z \right|^M }}} \right) + \mathcal{O}_{M,\rho } \left( {\frac{{e^{ - 2\pi \left| z \right|} }}{{\left| z \right|^M }}} \right) \\
& = \mathcal{O}_{M,\rho } \left( {e^{ - 2\pi \Im \left( z \right)} \left( {\frac{1}{{\left| {1 - e^{ - 2\pi iz} } \right|}} + \frac{1}{{\left| z \right|^M }}} \right)} \right)
\end{align*}
as $z \to \infty$ in the closed sector $\frac{\pi}{2} \leq \arg z \leq \frac{3\pi}{2}$. The extension to the cases $M=0,1$ follows from \eqref{eq25}, \eqref{eq52} and \eqref{eq53}. Similarly, we find that
\[
\widetilde R_{N,M} \left( z \right) =  - e^{2\pi iz} R_M \left( { ze^{ - \pi i}} \right) + R_{N,M} \left( {ze^{ - \pi i} } \right)
\]
for $\frac{\pi}{2} < \arg z < \frac{3\pi}{2}$,
and therefore
\[
\widetilde R_{N,M} \left( z \right) = \mathcal{O}_M \left( {\frac{{e^{ - 2\pi \Im \left( z \right)} }}{{\left| z \right|^M }}} \right) + \mathcal{O}_{M,\rho } \left( {\frac{{e^{ - 2\pi \left| z \right|} }}{{\left| z \right|^M }}} \right) = \mathcal{O}_{M,\rho } \left( {\frac{{e^{ - 2\pi \Im \left( z \right)} }}{{\left| z \right|^M }}} \right)
\]
as $z \to \infty$ in the sector $\frac{\pi}{2} \leq \arg z \leq \frac{3\pi}{2}$. The proof of the corresponding estimates for the sector $-\frac{3\pi}{2} \leq \arg z \leq - \frac{\pi}{2}$ is completely analogous.

\subsection{Stokes phenomenon and Berry's transition}

It was shown by Paris and Wood \cite[equations (3.2) and (3.4)]{Paris2} that the asymptotic expansions
\begin{equation}\label{eq50}
\log \Gamma^{\ast}  \left( z \right) \sim \sum\limits_{n = 1}^\infty  {\frac{{B_{2n} }}{{2n\left( {2n - 1} \right)z^{2n - 1} }}} - \begin{cases} 0 & \text{ if } \; \left|\arg z\right| <\frac{\pi}{2} \\ \frac{1}{2}\log \left( {1 - e^{ \pm 2\pi iz} } \right) & \text{ if } \; \arg z = \pm\frac{\pi}{2} \\ \log \left( {1 - e^{ \pm 2\pi iz} } \right) & \text{ if } \; \frac{\pi}{2} < \pm \arg z < \pi, \end{cases}  
\end{equation}
hold as $z \to \infty$. Expanding the logarithm into its Taylor series yields
\begin{equation}\label{eq31}
\log \Gamma^{\ast} \left( {z} \right) \sim \sum\limits_{n  = 1}^\infty  {\frac{B_{2n}}{2n \left( {2n  - 1} \right)z^{2n  - 1} }} + \sum\limits_{k = 1}^\infty  {S_k \left( \theta  \right)e^{ \pm 2\pi ikz} },
\end{equation}
as $z \to \infty$ in the sector $\left|\arg z\right| \leq \pi - \delta < \pi$, for any $0 < \delta \leq \pi$. Here
\begin{equation}\label{eq32}
S_k \left( \theta  \right) = \begin{cases} \frac{1}{k} & \text{ if } \frac{\pi}{2} < \left|\theta\right| <\pi \\ \frac{1}{2k} & \text{ if } \theta = \pm \frac{\pi}{2} \\ 0 & \text{ if } \left|\theta\right| < \frac{\pi}{2} \end{cases}
\end{equation}
are the Stokes multipliers and $\theta = \arg z$. The upper or lower sign is taken in \eqref{eq31} and \eqref{eq32} according as $z$ is in the upper or lower half-plane. Taking the exponential of both sides in \eqref{eq31}, we arrive at the expansions
\begin{equation}\label{eq33}
\Gamma^{\ast}\left( z \right) \sim \sum\limits_{n  = 0}^\infty  {\left( { - 1} \right)^n  \frac{\gamma_n}{z^n}} + \mathscr{S}_1 \left( \theta  \right)e^{ \pm 2\pi iz} \sum\limits_{n  = 0}^\infty  {\left( { - 1} \right)^n  \frac{\gamma_n}{z^n}}  +  \cdots + \mathscr{S}_k \left( \theta  \right)e^{ \pm 2\pi ikz} \sum\limits_{n  = 0}^\infty  {\left( { - 1} \right)^n  \frac{\gamma_n}{z^n}}  +  \cdots
\end{equation}
and
\begin{equation}\label{eq34}
\frac{1}{\Gamma^\ast \left( z \right)} \sim \sum\limits_{n = 0}^\infty  {\frac{{\gamma _n }}{{z^n }}}  + \widetilde{ \mathscr{S}}_1 \left( \theta  \right)e^{ \pm 2\pi iz} \sum\limits_{n = 0}^\infty  {\frac{{\gamma _n }}{{z^n }}}  +  \cdots  + \widetilde{\mathscr{S}}_k \left( \theta  \right)e^{ \pm 2\pi i k z} \sum\limits_{n = 0}^\infty  {\frac{{\gamma _n }}{{z^n }}}  +  \cdots ,
\end{equation}
as $z \to \infty$ in the sector $\left|\arg z\right| \leq \pi - \delta <\pi$, $0 < \delta \leq \pi$, with the Stokes multipliers
\[
\mathscr{S}_k \left( \theta  \right) = \begin{cases} 1 & \text{ if } \frac{\pi}{2} < \left|\theta\right| <\pi \\ \frac{1}{k!}\left(\frac{1}{2}\right)_k & \text{ if } \theta = \pm \frac{\pi}{2} \\ 0 & \text{ if } \left|\theta\right| < \frac{\pi}{2} \end{cases}
\]
and
\[
\widetilde{\mathscr{S}}_1 \left( \theta  \right) = \begin{cases} -1 & \text{ if } \frac{\pi}{2} < \left|\theta\right| <\pi \\ - \frac{1}{2} & \text{ if } \theta = \pm \frac{\pi}{2} \\ 0 & \text{ if } \left|\theta\right| < \frac{\pi}{2} \end{cases}, \;
\widetilde{\mathscr{S}}_k \left( \theta  \right) = \begin{cases} 0 & \text{ if } \frac{\pi}{2} < \left|\theta\right| <\pi \\ \frac{1}{k!}\left(-\frac{1}{2}\right)_k & \text{ if } \theta = \pm \frac{\pi}{2} \\ 0 & \text{ if } \left|\theta\right| < \frac{\pi}{2} \end{cases} \; \text{ for } \; k\geq 2.
\]
Here $\left(x\right)_k = \Gamma\left(x+k\right)/\Gamma\left(x\right)$ stands for the Pochhammer symbol \cite[p. 136]{NIST}. It is seen that there is a discontinuous change in the coefficients of the exponential terms when $\arg z$ changes continuously across $\arg z = \pm \frac{\pi}{2}$. We have encountered a Stokes phenomenon with Stokes lines $\arg z = \pm \frac{\pi}{2}$. The formulas for $\mathscr{S}_1 \left( \theta  \right)$ and $\widetilde{\mathscr{S}}_1 \left( \theta  \right)$ are in agreement with Dingle's non-rigorous ``final main rule" in his theory of terminants \cite[p. 414]{Dingle}, namely that half the discontinuity occurs on reaching the Stokes ray, and half on leaving it the other side. However, for the higher-order Stokes multipliers this rule is no longer valid.

The interesting behaviour of the asymptotic series for the reciprocal gamma function is worth noting. On the Stokes lines $\arg z = \pm \frac{\pi}{2}$ infinitely many subdominant exponential terms appear in the expansion and as $\arg z$ passes through the values $\pm\frac{\pi}{2}$, all but one of them disappear.

In the important paper \cite{Berry2}, Berry provided a new interpretation of the Stokes phenomenon; he found that assuming optimal truncation, the transition between compound asymptotic expansions is of Error function type, thus yielding a smooth, although very rapid, transition as a Stokes line is crossed.

Using the exponentially improved expansions given in Theorem \ref{thm4}, we show that the asymptotic expansions of the gamma function and its reciprocal exhibit the Berry transition between the two asymptotic series across the Stokes lines $\arg z = \pm \frac{\pi}{2}$. More precisely, we shall find that the first few terms of the series in \eqref{eq33} and \eqref{eq34} corresponding to the subdominant exponentials $e^{ \pm 2\pi iz}$ ``emerge" in a rapid and smooth way as $\arg z$ passes through $\pm\frac{\pi}{2}$.

From Theorem \ref{thm4}, we conclude that if $N \approx 2\pi\left|z\right|$ then for large $z$, $\left|\arg z\right| < \pi$, we have
\[
\Gamma^\ast \left( z \right) \approx \sum\limits_{n = 0}^{N - 1} {\left( { - 1} \right)^n \frac{{\gamma _n }}{{z^n }}}  + e^{2\pi iz} \sum\limits_{m = 0} {\left( { - 1} \right)^m \frac{{\gamma _m }}{{z^m }}\widehat T_{N - m} \left( {2\pi iz} \right)}  - e^{ - 2\pi iz} \sum\limits_{m = 0} {\left( { - 1} \right)^m \frac{{\gamma _m }}{{z^m }}\widehat T_{N - m} \left( { - 2\pi iz} \right)} 
\]
and
\[
\frac{1}{\Gamma^\ast  \left( z \right)} \approx \sum\limits_{n = 0}^{N - 1} {\frac{{\gamma _n }}{{z^n }}}  - e^{2\pi iz} \sum\limits_{m = 0} {\frac{{\gamma _m }}{{z^m }}\widehat T_{N - m} \left( {2\pi iz} \right)}  + e^{ - 2\pi iz} \sum\limits_{m = 0} {\frac{{\gamma _m }}{{z^m }}\widehat T_{N - m} \left( { - 2\pi iz} \right)} ,
\]
where $\sum\nolimits_{m = 0}$ means that the sum is restricted to the leading terms of the series. In the upper half-plane the terms involving $\widehat T_{N - m} \left( { - 2\pi iz} \right)$ are exponentially small, the dominant contribution comes from the terms involving $\widehat T_{N - m} \left( { 2\pi iz} \right)$. Under the above assumptions on $N$, from \eqref{eq35} and \eqref{eq27}, the Terminant functions have the asymptotic behaviour
\[
\widehat T_{N - m} \left( {2\pi iz} \right) \sim \frac{1}{2} + \frac{1}{2}\mathop{\text{erf}}\left( {\left( {\theta  - \frac{\pi }{2}} \right)\sqrt {\pi \left| z \right|} } \right)
\]
provided that $\arg z = \theta$ is close to $\frac{\pi}{2}$, $z$ is large and $m$ is small in comparison with $N$. Therefore, when $\theta  < \frac{\pi}{2}$, the Terminant functions are exponentially small; for $\theta  = \frac{\pi }{2}$, they are asymptotically $\frac{1}{2}$ up to an exponentially small error; and when $\theta  >  \frac{\pi}{2}$, the Terminant functions are asymptotic to $1$ with an exponentially small error. Thus, the transition across the Stokes line $\arg z = \frac{\pi}{2}$ is effected rapidly and smoothly. Similarly, in the lower half-plane, the dominant contribution is controlled by the terms involving $\widehat T_{N - m} \left( { - 2\pi iz} \right)$. From \eqref{eq26} and \eqref{eq27}, we have
\[
\widehat T_{N - m} \left( { - 2\pi iz} \right) \sim - \frac{1}{2} + \frac{1}{2} \mathop{\text{erf}} \left( {\left( {\theta  + \frac{\pi }{2}} \right)\sqrt {\pi \left| z \right|} } \right)
\]
under the assumptions that $\arg z = \theta$ is close to $-\frac{\pi}{2}$, $z$ is large and $m$ is small in comparison with $N \approx 2\pi\left|z\right|$. Thus, when $\theta  >  - \frac{\pi}{2}$, the Terminant functions are exponentially small; for $\theta  =  -\frac{\pi}{2}$, they are asymptotic to $-\frac{1}{2}$ with an exponentially small error; and when $\theta < - \frac{\pi}{2}$, the Terminant functions are asymptotically $-1$ up to an exponentially small error. Therefore, the transition through the Stokes line $\arg z = -\frac{\pi}{2}$ is carried out rapidly and smoothly.

We remark that the smooth transition of the subdominant exponential $e^{2\pi iz}$ was also discussed by Boyd \cite{Boyd2}, though he used a slightly different approximation for the Terminant functions.

\section{The asymptotics of the late coefficients}\label{section4}

The asymptotic form of the Stirling coefficients $\gamma_n$ is well known. Their leading order behaviour was investigated by Watson \cite{Watson} using the method of Darboux, and by Diekmann \cite{Diekmann} using the method of steepest descents. Murnaghan and Wrench \cite{Murnaghan} gave higher approximations employing Darboux's method. Complete asymptotic expansions were derived by Dingle \cite[pp. 158--159]{Dingle}, though, his results were obtained by formal and interpretive, rather than rigorous, methods. His expansions may be written, in our notation,
\begin{equation}\label{eq45}
\gamma _{2n - 1}  \approx \frac{{\left( { - 1} \right)^n 2}}{{\left( {2\pi } \right)^{2n} }}\sum\limits_{k = 0}^\infty  {\left( { - 1} \right)^k \gamma _{2k} \left( {2\pi } \right)^{2k} \Gamma \left( {2n - 2k - 1} \right)\zeta \left( {2n - 2k} \right)} 
\end{equation}
and
\begin{equation}\label{eq42}
\gamma _{2n}  \approx \frac{{\left( { - 1} \right)^n 2}}{{\left( {2\pi } \right)^{2n} }}\sum\limits_{k = 0}^\infty  {\left( { - 1} \right)^k \gamma _{2k + 1} \left( {2\pi } \right)^{2k} \Gamma \left( {2n - 2k - 1} \right)\zeta \left( {2n - 2k} \right)} .
\end{equation}
Finally, Boyd \cite{Boyd2} gave expansions similar to Dingle's complete with error bounds, using truncated forms of the approximations 
\[
\Gamma^\ast \left( z \right) \sim \sum\limits_{k = 0}^\infty  {\left( { - 1} \right)^k \frac{\gamma_k}{z^k }} \; \text{ and } \; \Gamma^\ast \left( z \right) \sim \frac{1}{1 - e^{2\pi i z} }\sum\limits_{k = 0}^\infty  {\left( { - 1} \right)^k \frac{\gamma_k}{z^k}}
\]
with $z=is$ ($s>0$), and his resurgence formula for the gamma function. Although both expansions are valid along the positive imaginary axis in Poincar\'{e}'s sense, from \eqref{eq50} it is seen that the first one is more suitable when $\left|\arg z\right| < \frac{\pi}{2}$ and the second one is more suitable when $\frac{\pi}{2} < \arg z <\pi$. In our notations, Boyd's results can be written as
\begin{equation}\label{eq46}
\gamma _{2n - 1} = \frac{{\left( { - 1} \right)^n 2}}{{\left( {2\pi } \right)^{2n} }}\sum\limits_{k = 0}^{K-1}  {\left( { - 1} \right)^k \gamma _{2k} \left( {2\pi } \right)^{2k} \Gamma \left( {2n - 2k - 1} \right)} + M_{K}\left(2n-1\right),
\end{equation}
\begin{equation}\label{eq43}
\gamma _{2n} = \frac{{\left( { - 1} \right)^n 2}}{{\left( {2\pi } \right)^{2n} }}\sum\limits_{k = 0}^{K-1}  {\left( { - 1} \right)^k \gamma _{2k + 1} \left( {2\pi } \right)^{2k} \Gamma \left( {2n - 2k - 1} \right)} + M_{K}\left(2n\right)
\end{equation}
and
\begin{equation}\label{eq47}
\gamma _{2n - 1} = \frac{{\left( { - 1} \right)^n 2}}{{\left( {2\pi } \right)^{2n} }}\sum\limits_{k = 0}^{K-1}  {\left( { - 1} \right)^k \gamma _{2k} \left( {2\pi } \right)^{2k} \Gamma \left( {2n - 2k - 1} \right)\zeta \left( {2n - 2k - 1} \right)} + \widehat{M}_{K}\left(2n-1\right),
\end{equation}
\begin{equation}\label{eq44}
\gamma _{2n} = \frac{{\left( { - 1} \right)^n 2}}{{\left( {2\pi } \right)^{2n} }}\sum\limits_{k = 0}^{K-1}  {\left( { - 1} \right)^k \gamma _{2k + 1} \left( {2\pi } \right)^{2k} \Gamma \left( {2n - 2k - 1} \right)\zeta \left( {2n - 2k - 1} \right)} + \widehat{M}_{K}\left(2n\right).
\end{equation}
Here $1 \le K < n$ and the truncation errors $M_K$ and $\widehat{M}_{K}$ can be bounded explicitly and realistically.

Boyd observed that estimating the error term $M_{K}\left(2n-1\right)$ in \eqref{eq46} via the exponentially improved expansion of the gamma function \eqref{eq48} along the imaginary axis, leads to an improved version of the late coefficient formula \eqref{eq46}. His improved expansion \cite[equation (3.42)]{Boyd2} also shed some light on the idea behind Dingle's formula \eqref{eq45}, especially on the appearance of the zeta function of Riemann in the approximation and its numerical superiority over Boyd's formula \eqref{eq46}.

The main goal of this section is to derive new asymptotic series for the Stirling coefficients using the representations \eqref{eq7} and \eqref{eq8} and an exponentially improved asymptotic expansion for the gamma function. These new expansions utilise all the exponentially small terms in \eqref{eq33} and provide a full explanation of the remarkable accuracy of Dingle's series \eqref{eq45} and \eqref{eq42}. From \eqref{eq50}, we see that
\begin{equation}\label{eq16}
\Gamma^\ast \left( {is} \right) \sim \frac{1}{\sqrt {1 - e^{ - 2\pi s} }}\sum\limits_{k = 0}^\infty  {\left( { - 1} \right)^k \frac{\gamma_k}{\left( {is} \right)^k }} 
\end{equation}
as $s \to +\infty$. Consequently, we have
\[
\Re \Gamma^\ast \left( {is} \right) \sim \frac{1}{\sqrt {1 - e^{ - 2\pi s} }}\sum\limits_{k = 0}^\infty  {\left( { - 1} \right)^k \frac{\gamma _{2k}}{s^{2k}}} 
\]
and
\[
\Im \Gamma^\ast \left( {is} \right) \sim \frac{1}{\sqrt {1 - e^{ - 2\pi s} }}\sum\limits_{k = 0}^\infty  {\left( { - 1} \right)^k \frac{\gamma _{2k + 1}}{s^{2k + 1}}} 
\]
as $s \to +\infty$. Substituting these expansions into \eqref{eq7} and \eqref{eq8} yields the formal asymptotic series
\begin{equation}\label{eq17}
\gamma _{2n - 1}  \approx \frac{{\left( { - 1} \right)^n 2}}{{\left( {2\pi } \right)^{2n} }}\sum\limits_{k = 0}^\infty  {\left( { - 1} \right)^k \gamma _{2k} \left( {2\pi } \right)^{2k} \Gamma \left( {2n - 2k - 1} \right)\xi \left( {2n - 2k - 1} \right)} 
\end{equation}
and
\begin{equation}\label{eq18}
\gamma _{2n}  \approx \frac{{\left( { - 1} \right)^n 2}}{{\left( {2\pi } \right)^{2n} }}\sum\limits_{k = 0}^\infty  {\left( { - 1} \right)^k \gamma _{2k + 1} \left( {2\pi } \right)^{2k} \Gamma \left( {2n - 2k - 1} \right)\xi \left( {2n - 2k - 1} \right)} 
\end{equation}
for large $n$. Here, the function $\xi \left( r \right)$ is given by the Dirichlet series
\begin{align*}
\xi \left( r \right) = \frac{{\left( {2\pi } \right)^r }}{{\Gamma \left( r \right)}}\int_0^{ + \infty } {\frac{{s^{r - 1} e^{ - 2\pi s} }}{{\sqrt {1 - e^{ - 2\pi s} } }}ds} & = \sum\limits_{m = 0}^\infty  {\frac{\left(\frac{1}{2}\right)_m}{m! \left( {m + 1} \right)^r}} 
\\ & = 1 + \frac{1}{2}\frac{1}{{2^r }} + \frac{3}{8}\frac{1}{{3^r }} + \frac{5}{{16}}\frac{1}{{4^r }} + \frac{{35}}{{128}}\frac{1}{{5^r }} +  \cdots ,
\end{align*}
provided that $r > \frac{1}{2}$. The formal expansions in \eqref{eq17} and \eqref{eq18} can be turned into exact results by constructing error bounds for the series \eqref{eq16}, but we do not pursue the details here. We shall assume that optimal truncation of these series provides good approximations for the Stirling coefficients $\gamma_n$.

For large $n$ and fixed $k$, we have
\[
\zeta \left( 2n - 2k \right) \approx 1 + \frac{1}{2^{2n - 2k}} + \frac{1}{3^{2n - 2k}}
\]
\[
\zeta \left( 2n - 2k - 1 \right) \approx 1 + 2\frac{1}{2^{2n - 2k}} + 3\frac{1}{3^{2n - 2k}},
\]
and
\[
\xi \left( 2n - 2k - 1 \right) \approx 1 + \frac{1}{2^{2n - 2k}} + \frac{9}{8}\frac{1}{3^{2n - 2k}}.
\]
These approximations explain Boyd's observation, namely that, assuming optimal truncation, Dingle's expansions provide better approximations than Boyd's original series. We also get a numerical explanation of the appearance of Riemann's zeta function in Dingle's expansions.

We remark that Boyd's improved series \cite[equation (3.42)]{Boyd2} for $\gamma_{2n-1}$ is \eqref{eq17} with the approximate values
\[
\xi \left( {2n - 2k - 1} \right) \approx \begin{cases} 1 + 2^{-2n} & \text{ if } \; k = 0 \\ 1 & \text{ if } \; k>0. \end{cases}
\]

In our calculations we truncated the expansions of Dingle and ours at $k=K-1$ like Boyd did and chose the value of $K$ optimally. Optimality here means that we choose $K$ in terms of $n$ such that the last term of the remaining series is asymptotically the smallest in absolute value for large $n$. It can be shown that the optimal choice of $K$ for both the expansions of $\gamma_{2n-1}$ and $\gamma_{2n}$ is $K=\left\lceil \frac{n}{2}\right\rceil$, i.e., in the sums $k$ runs from $0$ to $\left\lceil \frac{n}{2}\right\rceil-1$. Tables \ref{table1} and \ref{table2} display the numerical results obtained for the coefficients $\gamma_{101}$ and $\gamma_{100}$ by using the optimally truncated approximations of Dingle, Boyd and ours.

\begin{table*}[!ht]
\begin{center}
\begin{tabular}
[c]{ l r @{\,}c@{\,} l}\hline
 & \\ [-1ex]
 values of $n$ and $K$ & $n=51$, $K=26$ & &  \\ [1ex]
 exact numerical value of $\gamma_{2n-1}$ & $-0.718920823005286472090671337669485196245$ & $\times$ & $10^{77}$ \\ [1ex]
 Dingle's approximation \eqref{eq45} to $\gamma_{2n-1}$ & $-0.718920823005286472090671337669485196372$ & $\times$ & $10^{77}$ \\ [1ex]
 error & $0.127$ & $\times$ & $10^{41}$\\ [1ex]
 Boyd's approximation \eqref{eq46} to $\gamma_{2n-1}$ & $-0.718920823005286472090671337669343420137$ & $\times$ & $10^{77}$ \\ [1ex]
 error & $-0.141776108$ & $\times$ & $10^{47}$\\ [1ex]
 Boyd's approximation \eqref{eq47} to $\gamma_{2n-1}$ & $-0.718920823005286472090671337669626972607$ & $\times$ & $10^{77}$ \\ [1ex]
 error & $0.141776362$ & $\times$ & $10^{47}$\\ [1ex]
 approximation \eqref{eq17} to $\gamma_{2n-1}$ & $-0.718920823005286472090671337669485196372$ & $\times$ & $10^{77}$ \\ [1ex]
 error & $0.127$ & $\times$ & $10^{41}$\\ [-1ex]
 & \\\hline
\end{tabular}
\end{center}
\caption{Approximations for $\gamma_{101}$, using optimal truncation.}
\label{table1}
\end{table*}

\begin{table*}[!ht]
\begin{center}
\begin{tabular}
[c]{ l r @{\,}c@{\,} l}\hline
 & \\ [-1ex]
 values of $n$ and $K$ & $n=50$, $K=25$ & &  \\ [1ex]
 exact numerical value of $\gamma_{2n}$ & $-0.238939789661593595677447537129753012$ & $\times$ & $10^{74}$ \\ [1ex]
 Dingle's approximation \eqref{eq42} to $\gamma_{2n}$ & $-0.238939789661593595677447537129753175$ & $\times$ & $10^{74}$ \\ [1ex]
 error & $0.163$ & $\times$ & $10^{41}$\\ [1ex]
 Boyd's approximation \eqref{eq43} to $\gamma_{2n}$ & $-0.238939789661593595677447537129564608$ & $\times$ & $10^{74}$ \\ [1ex]
 error & $-0.188403$ & $\times$ & $10^{44}$\\ [1ex]
 Boyd's approximation \eqref{eq44} to $\gamma_{2n}$ & $-0.238939789661593595677447537129941741$ & $\times$ & $10^{74}$ \\ [1ex]
 error & $0.188729$ & $\times$ & $10^{44}$\\ [1ex]
 approximation \eqref{eq18} to $\gamma_{2n}$ & $-0.238939789661593595677447537129753175$ & $\times$ & $10^{74}$ \\ [1ex]
 error & $0.163$ & $\times$ & $10^{41}$\\ [-1ex]
 & \\\hline
\end{tabular}
\end{center}
\caption{Approximations for $\gamma_{100}$, using optimal truncation.}
\label{table2}
\end{table*}

It is seen from the numerical computations that our expansions provide better approximations than that of Boyd's and are comparable with the expansions of Dingle. We remark that the approximate numerical value of $\gamma_{100}$ arising from Boyd's formula \eqref{eq44} was given incorrectly in Table 4 of his paper \cite{Boyd2}.

\appendix

\section{The computation of the Stirling coefficients}\label{appendix}

In this appendix, we collect some known recurrence representations of the Stirling coefficients $\gamma_n$. The exact values of $\gamma_n$ up to $\gamma_{30}$ can be found in the papers of Spira \cite{Spira} and Wrench \cite{Wrench}. Explicit formulas for the Stirling coefficients are given by Boyd \cite{Boyd4}, Brassesco and M\'endez \cite{Brassesco}, Comtet \cite[p. 267]{Comtet}, De Angelis \cite{De Angelis}, L\'{o}pez, Pagola and P\'{e}rez Sinus\'\i a \cite{Lopez}, Nemes \cite{Nemes} and Wrench \cite{Wrench}. 

\subsection{Recurrence relations}
Based on the Lagrange inversion formula, Brassesco and M\'endez \cite{Brassesco} find recursive formulas for the computation of the Stirling coefficients. Define the sequence $b_n$ by the recurrence relation
\begin{equation}\label{eq36}
b_n  = \frac{{2 - n}}{{3n + 3}}b_{n - 1}  - \frac{1}{{n + 1}}\sum\limits_{k = 2}^{n - 3} {\left( {k + 1} \right)b_{k + 1} b_{n - k} } ,
\end{equation}
for $n \ge 4$ with $b_1  = 1$, $b_3 = \frac{1}{36}$. Then the coefficients $\gamma_n$ can be computed as
\[
\gamma _n  = \left( { - 1} \right)^n \frac{\left( {2n + 1} \right)!}{2^n n!}b_{2n + 1} .
\]
This recurrence was also given by Borwein and Corless \cite{Borwein}.

Upon replacing $k$ by $n-k-1$ in the sum, we see that the recurrence relation \eqref{eq36} may be written in the form
\[
b_n  = \frac{{2 - n}}{{3n + 3}}b_{n - 1}  - \frac{1}{2}\sum\limits_{k = 2}^{n - 3} {b_{k + 1} b_{n - k} } .
\]
This formula was also found by Copson \cite[p. 56]{Copson}.

Wrench \cite{Wrench} found recurrence formulas in terms of the Bernoulli numbers $B_k$ since
\[
\gamma _{2n - 1}  =  - \frac{1}{2n - 1}\sum\limits_{k = 1}^n {\frac{B_{2k}}{2k}\gamma _{2n - 2k} } \; \text{ and } \; 
\gamma _{2n}  =  - \frac{1}{2n}\sum\limits_{k = 1}^n {\frac{B_{2k}}{2k}\gamma _{2n - 2k + 1} } ,
\]
for $n \ge 1$ with $\gamma_0 = 1$. To derive these results, he used the formal generating function
\begin{equation}\label{eq37}
\exp \left( {\sum\limits_{n = 1}^\infty  {\frac{B_{2n}}{2n\left( {2n - 1} \right)}x^{2n - 1} } } \right) = \exp \left( {\sum\limits_{n = 1}^\infty  {\frac{B_{n + 1}}{n\left( {n + 1} \right)}x^n } } \right) = \sum\limits_{n = 0}^\infty  {\left( { - 1} \right)^n \gamma _n x^n }
\end{equation}
which follows from \eqref{eq4}. We derive here another type of recurrence formulas using the generating function \eqref{eq37}. Differentiating both sides of \eqref{eq37} with respect to $x$ and dividing each side by the exponential expression on the left-hand side of \eqref{eq37}, we find
\[
\sum\limits_{n = 1}^\infty  {\frac{{B_{2n} }}{{2n}}x^{2n - 2} }  = \exp \left( { - \sum\limits_{n = 1}^\infty  {\frac{{B_{2n} }}{{2n\left( {2n - 1} \right)}}x^{2n - 1} } } \right)\sum\limits_{n = 1}^\infty  {\left( { - 1} \right)^n n\gamma _n x^{n - 1} } .
\]
Noting that
\begin{equation}\label{eq38}
\exp \left( { - \sum\limits_{n = 1}^\infty  {\frac{{B_{2n} }}{{2n\left( {2n - 1} \right)}}x^{2n - 1} } } \right) = \exp \left( {\sum\limits_{n = 1}^\infty  {\frac{{B_{2n} }}{{2n\left( {2n - 1} \right)}}\left( { - x} \right)^{2n - 1} } } \right) = \sum\limits_{n = 0}^\infty  {\gamma _n x^n } ,
\end{equation}
and equating the coefficients of equal powers of $x$ we deduce the recursive formulas
\begin{equation}\label{eq39}
\gamma _{2n - 1}  =  - \frac{{B_{2n} }}{{2n\left( {2n - 1} \right)}} + \frac{1}{{2n - 1}}\sum\limits_{k = 1}^{2n - 2} {\left( { - 1} \right)^k k\gamma _k \gamma _{2n - k - 1} } \; \text{ and } \;
\gamma _{2n}  =  - \frac{1}{{2n}}\sum\limits_{k = 1}^{2n - 1} {\left( { - 1} \right)^k k\gamma _k \gamma _{2n - k} } ,
\end{equation}
for $n \ge 1$ with $\gamma_0 = 1$. From \eqref{eq37} and \eqref{eq38} we can immediately obtain the known expression
\[
\sum\limits_{k = 0}^n {\left( { - 1} \right)^k \gamma _k \gamma _{n - k} }  = 0
\]
for $n \ge 1$ (see, e.g., Paris and Kaminski \cite[p. 33]{Paris3}). When $n$ is odd, this is a simple identity, for $n \geq 2$ even it gives
\[
\gamma _{n}  =  - \frac{1}{2}\sum\limits_{k = 1}^{n - 1} {\left( { - 1} \right)^k \gamma _k \gamma _{n - k} } 
\]
which is equivalent to the second recurrence in \eqref{eq39}.

\subsection{Representations using polynomial sequences}
In 1952, Lauwerier \cite{Lauwerier} showed that the coefficients in asymptotic expansions of Laplace-type integrals can be calculated by means of linear recurrence relations. As an illustration of his method, he considered, inter alia, the Stirling coefficients $\gamma_n$. Define the sequence of polynomials $P_0 \left( x \right), P_1 \left( x \right), P_2 \left( x \right),\ldots$ via the recurrence
\[
P_n \left( x \right) =  - P_{n - 1} \left( x \right) + \frac{{x^{ - \frac{n}{2}} }}{2}\int_0^x {t^{\frac{n}{2}} P_{n - 1} \left( t \right)dt} 
\]
for $n \ge 1$ with $P_0 \left( x \right) = 1$. Then the coefficients $\gamma_n$ can be recovered from the formula
\[
\gamma _n  = \frac{{\left( { - 1} \right)^n }}{{\sqrt {2\pi } }}\int_0^{ + \infty } {e^{ - \frac{t}{2}} t^{n - \frac{1}{2}} P_{2n} \left( t \right)dt} .
\]
It is known that the Stirling coefficients are related to certain polynomials $U_n \left( x \right)$ appearing in the uniform asymptotic expansions of the modified Bessel functions. These polynomials satisfy the following recurrence
\[
U_n \left( x \right) = \frac{1}{2}x^2 \left( {1 - x^2 } \right)U'_{n-1} \left( x \right) + \frac{1}{8}\int_0^x {\left( {1 - 5t^2 } \right)U_{n-1} \left( t \right)dt} 
\]
for $n \ge 1$ with $U_0\left(x\right) = 1$. The coefficients $\gamma_n$ are then given by $\gamma_n = U_n\left(1\right)$ (see, e.g., Olver \cite{Olver3}).

\section*{Acknowledgement} I would like to thank the anonymous referee for his/her thorough, constructive and helpful comments and suggestions on the manuscript.

\end{document}